 \newcommand{\nm}[1]{\left\lVert {#1} \right\rVert}
 \newcommand{\dual}[1]{\left\langle {#1} \right\rangle}
\newtheorem{assumption}{Assumption}[section]
\journalname{}
\begin{document}

\title{A Subspace Minimization Barzilai-Borwein Method for Multiobjective Optimization Problems}


\author{Jian Chen \and Liping Tang \and  Xinmin Yang  }

\institute{J. Chen \at National  Center  for  Applied  Mathematics in Chongqing, Chongqing Normal University, Chongqing 401331, China, and School of Mathematical Sciences, University of Electronic Science and Technology of China, Chengdu, Sichuan 611731, China\\
                    \href{mailto:chenjian_math@163.com}{chenjian\_math@163.com}\\
                   L.P. Tang \at National Center for Applied Mathematics in Chongqing, Chongqing Normal University, Chongqing 401331, China\\
                   \href{mailto:tanglipings@163.com}{tanglipings@163.com}\\
        \Letter X.M. Yang \at National Center for Applied Mathematics in Chongqing, and School of Mathematical Sciences,  Chongqing Normal University, Chongqing 401331, China\\
        \href{mailto:xmyang@cqnu.edu.cn}{xmyang@cqnu.edu.cn} \\}

\date{Received: date / Accepted: date}

\maketitle

\begin{abstract}
Nonlinear conjugate gradient methods have recently garnered significant attention within the multiobjective optimization community. These methods aim to maintain consistency in conjugate parameters with their single-objective optimization counterparts. However, the preservation of the attractive conjugate property of search directions remains uncertain, even for quadratic cases, in multiobjective conjugate gradient methods. This loss of interpretability of the last search direction significantly limits the applicability of these methods. To shed light on the role of the last search direction, we introduce a novel approach called the subspace minimization Barzilai-Borwein method for multiobjective optimization problems (SMBBMO). In SMBBMO, each search direction is derived by optimizing a preconditioned Barzilai-Borwein subproblem within a two-dimensional subspace generated by the last search direction and the current Barzilai-Borwein descent direction. Furthermore, to ensure the global convergence of SMBBMO, we employ a modified Cholesky factorization on a transformed scale matrix, capturing the local curvature information of the problem within the two-dimensional subspace. Under mild assumptions, we establish both global and $Q$-linear convergence of the proposed method. Finally, comparative numerical experiments confirm the efficacy of SMBBMO, even when tackling large-scale and ill-conditioned problems.

\keywords{Multiobjective optimization \and Subspace method \and Barzilai-Borwein's method \and Global convergence}
\subclass{90C29 \and 90C30}
\end{abstract}

\section{Introduction}
An unconstrained multiobjective optimization problem (MOP) is typically formulated as follows:
\begin{align*}
	\min\limits_{x\in\mathbb{R}^{n}} F(x), \tag{MOP}\label{MOP}
\end{align*}
where $F:\mathbb{R}^{n}\rightarrow\mathbb{R}^{m}$ is a continuously differentiable function. This type of problem finds widespread applications across various domains, including engineering \cite{MA2004}, economics \cite{FW2014}, management science \cite{E1984}, and machine learning \cite{SK2018}, among others. These applications often involve the simultaneous optimization of multiple objectives. However, achieving a single solution that optimizes all objectives is often impractical. Therefore, optimality is defined by {\it Pareto optimality} or {\it efficiency}. A solution is deemed Pareto optimal or efficient if no objective can be improved without sacrificing the others. 
\par  In the past two decades, multiobjective gradient descent methods have gained significant traction within the multiobjective optimization community. These methods determine descent directions by solving subproblems, followed by the application of line search techniques along these directions to ensure sufficient improvement across all objectives. The origins of multiobjective gradient descent methods can be traced back to pioneering works by Mukai \cite{M1980} and Fliege and Svaiter \cite{FS2000}. The latter clarified that the multiobjective steepest descent direction reduces to the steepest descent direction when dealing with a single objective. This observation inspired researchers to extend ordinary numerical algorithms for solving MOPs (see, e.g., \cite{AP2021,BI2005,CL2016,FD2009,FV2016,GI2004,LP2018,MP2019,P2014,QG2011} and references therein). 
\par Recently,  Lucambio P\'{e}rez and Prudente \cite{LP2018} made significant advancements by extending Wolfe line search and the Zoutendijk condition to multiobjective optimization, thereby facilitating the exploration of multiobjective nonlinear conjugate gradient methods. These methods leverage both the current steepest descent direction and the last search direction to construct the current search direction, ensuring consistency in conjugate parameters with their counterparts in single-objective optimization problems (SOPs), such as Fletcher--Reeves \cite{LP2018}, Conjugate descent \cite{LP2018}, Dai--Yuan \cite{LP2018},  Polak--Ribi\`{e}re--Polyak \cite{LP2018}, Hestenes--Stiefel \cite{LP2018}, Hager--Zhang \cite{GP2020,HZC2024} and Liu--Storey \cite{GLP2022}.
\par The linear conjugate gradient method exhibits finite termination for convex quadratic minimization, owing to its attractive conjugate property.  
In multiobjective optimization, Fukuda et al. \cite{FGM2022} proposed a conjugate directions-type that achieves finite termination for strongly convex quadratic MOPs. Unfortunately, the method cannot be extended to non-quadratic cases. Moreover, the attractive conjugate property of search directions remains unknown for quadratic cases in existing multiobjective conjugate gradient methods. Conversely, in single-objective optimization, the absence of the conjugate property severely constrains the application of nonlinear conjugate gradient methods, particularly in large-scale non-quadratic cases. To tackle this challenge, Yuan and Stoer \cite{YS1995} devised the subspace minimization conjugate gradient (SMCG) method for SOPs. The search directions of SMCG are obtained by optimizing approximate models within two-dimensional subspaces generated by gradient and last search directions. Consequently, the conjugate parameters of SMCG is optimal with respect to approximate models. An advantage of SMCG is that lower-dimensional subspaces enable us to solve the corresponding subproblems efficiently. Furthermore, in many cases, the subspace approaches achieve comparable theoretical properties to their full-space counterparts. As described above, the extension of SMCG to MOPs is of great interest. Naturally, the key issues for such a method are how to choose the subspaces and how to obtain the approximate models for better curvature exploration.
\par In this paper, we propose a subspace minimization Barzilai-Borwein method for MOPs (SMBBMO), the choice of subspaces and approximate models are described as follows:
\par$\bullet$ {\it Subspace}: Chen et al. \cite{CTY2023a,CTY2023b} highlighted that imbalances among objectives seriously decelerate the convergence of steepest descent method. To alleviate the impact of imbalances, we propose constructing a subspace using both the current Barzilai-Borwein descent direction \cite{CTY2023a} and the previous search direction. This construction is defined as follows:
\begin{equation*}
	d_{k}=\left\{
	\begin{aligned}
		\begin{split}
			&v_{k}, & &{\rm if}~k=0, \\
			&\mu_{k}v_{k}+\nu_{k}d_{k-1}, & &{\rm if}~k\geq1,
		\end{split}
	\end{aligned}
	\right.
\end{equation*} 
where $(\mu_{k},\nu_{k})^{T}\in\mathbb{R}^{2}$, $v_{k}$ is the Barzilai-Borwein descent direction at $x^{k}$. Notably, $d^{k}$ follows a formula similar to the spectral conjugate gradient direction due to the Barzilai-Borwein descent direction $v_{k}$. However, existing multiobjective spectral conjugate gradient methods \cite{HCL2023,EE2024} confine their search directions to the subspace generated by the current steepest descent direction and the previous search direction. Consequently, these approaches may not fully leverage the benefits of spectral information for each objective.
\par$\bullet$ \textit{Approximate model}: To strike a balance between per-iteration cost and improved curvature exploration, we adopt the preconditioned Barzilai-Borwein subproblem \cite{CTY2024p} as the initial approximate model. To circumvent the need for matrix calculations, we utilize finite differences of gradients to estimate matrix-vector products. Consequently, the local curvature information of the problem in the two-dimensional subspace is represented by a $2\times2$ matrix. Similar to SMCG (Li et al., 2024), the global convergence of SMBBMO in non-convex cases remains uncertain when employing the $2\times2$ matrix selection strategy of SMCG. Motivated by the work of Lapucci et al. \cite{LLL2024}, we apply a modified Cholesky factorization to a transformed scale matrix. This enables us to establish global convergence for the proposed method.
\par The primary objective of SMBBMO is to achieve a faster convergence rate compared to the Barzilai-Borwein descent method while maintaining lower computational costs than the preconditioned Barzilai-Borwein method.
\par The paper is organized as follows. Section \ref{sec2} introduces necessary notations and definitions to be utilized later. In Section \ref{sec3}, we propose SMBBMO and explore the choice of subspace and approximate model. The global convergence and $Q$-linear convergence of SMBBMO are established in Section \ref{sec4}. Section \ref{sec5} presents numerical results demonstrating the efficiency of SMBBMO. Finally, conclusions are drawn at the end of the paper.

\section{Preliminaries}\label{sec2}
Throughout this paper, the $n$-dimensional Euclidean space $\mathbb{R}^{n}$ is equipped with the inner product $\langle\cdot,\cdot\rangle$ and the induced norm $\|\cdot\|$. Denote $\mathbb{S}^{n}_{++}(\mathbb{S}^{n}_{+})$ the set of symmetric (semi-)positive definite matrices in $\mathbb{R}^{n\times n}$.  We denote by $JF(x)\in\mathbb{R}^{m\times n}$ the Jacobian matrix of $F$ at $x$, by $\nabla F_{i}(x)\in\mathbb{R}^{n}$ the gradient of $F_{i}$ at $x$ and by $\nabla^{2}F_{i}(x)\in\mathbb{R}^{n\times n}$ the Hessian matrix of $F_{i}$ at $x$. For a positive definite matrix $H$, the notation $\|x\|_{H}=\sqrt{\langle x,Hx \rangle}$ is used to represent the norm induced by $H$ on vector $x$. For simplicity, we denote $[m]:=\{1,2,...,m\}$, and $$\Delta_{m}:=\left\{\lambda:\sum\limits_{i\in[m]}\lambda_{i}=1,\lambda_{i}\geq0,\ i\in[m]\right\}$$ the $m$-dimensional unit simplex. To prevent any ambiguity, we establish the order $\preceq(\prec)$ in $\mathbb{R}^{m}$ as follows: $$u\preceq(\prec)v~\Leftrightarrow~v-u\in\mathbb{R}^{m}_{+}(\mathbb{R}^{m}_{++}),$$
and in $\mathbb{S}^{n}$ as follows:
$$U\preceq(\prec)V~\Leftrightarrow~V-U\in\mathbb{S}^{n}_{+}(\mathbb{S}^{n}_{++}).$$
\par In the following, we introduce the concepts of optimality for (\ref{MOP}) in the Pareto sense. 
\vspace{2mm}
\begin{definition}\label{def1}
	A vector $x^{\ast}\in\mathbb{R}^{n}$ is called Pareto solution to (\ref{MOP}), if there exists no $x\in\mathbb{R}^{n}$ such that $F(x)\preceq F(x^{\ast})$ and $F(x)\neq F(x^{\ast})$.
\end{definition}
\vspace{2mm}
\begin{definition}\label{def2}
	A vector $x^{\ast}\in\mathbb{R}^{n}$ is called weakly Pareto solution to (\ref{MOP}), if there exists no $x\in\mathbb{R}^{n}$ such that $F(x)\prec F(x^{\ast})$.
\end{definition}
\vspace{2mm}
\begin{definition}\label{def3}
	A vector $x^{\ast}\in\mathbb{R}^{n}$ is called  Pareto critical point of (\ref{MOP}), if
	$$\mathrm{range}(JF(x^{*}))\cap-\mathbb{R}_{++}^{m}=\emptyset,$$
	where $\mathrm{range}(JF(x^{*}))$ denotes the range of linear mapping given by the matrix $JF(x^{*})$.

\end{definition}

\par From Definitions \ref{def1} and \ref{def2}, it is evident that Pareto solutions are always weakly Pareto solutions. The following lemma shows the relationships among the three concepts of Pareto optimality.
\vspace{2mm}
\begin{lemma}[See Theorem 3.1 of \cite{FD2009}] The following statements hold.
	\begin{itemize}
		\item[$\mathrm{(i)}$]  If $x\in\mathbb{R}^{n}$ is a weakly Pareto solution to (\ref{MOP}), then $x$ is Pareto critical point.
		\item[$\mathrm{(ii)}$] Let every component $F_{i}$ of $F$ be convex. If $x\in\mathbb{R}^{n}$ is a Pareto critical point of (\ref{MOP}), then $x$ is weakly Pareto solution.
		\item[$\mathrm{(iii)}$] Let every component $F_{i}$ of $F$ be strictly convex. If $x\in\mathbb{R}^{n}$ is a Pareto critical point of (\ref{MOP}), then $x$ is Pareto solution.
	\end{itemize}
\end{lemma}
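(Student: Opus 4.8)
The plan is to treat the three implications one at a time, in each case arguing by contraposition and exploiting the meaning of the Pareto-critical condition. Recall that $\mathrm{range}(JF(x))\cap-\mathbb{R}^m_{++}=\emptyset$ says precisely that there is \emph{no} common descent direction, i.e. no $d\in\mathbb{R}^n$ with $\langle\nabla F_i(x),d\rangle<0$ for every $i\in[m]$. Thus in each part I would show that failure of the target optimality notion produces exactly such a direction $d$, whence $x$ cannot be Pareto critical.

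For (i) I would prove the contrapositive: if $x$ is not Pareto critical, then it is not weakly Pareto. Non-criticality yields a direction $d$ with $\langle\nabla F_i(x),d\rangle<0$ for all $i$. A first-order Taylor expansion $F_i(x+td)=F_i(x)+t\langle\nabla F_i(x),d\rangle+o(t)$ then shows that for all sufficiently small $t>0$ one has $F_i(x+td)<F_i(x)$ simultaneously for every $i$, that is $F(x+td)\prec F(x)$, contradicting weak Pareto optimality.

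For (ii) and (iii) I would again argue by contraposition, and here the convexity hypotheses enter through the first-order characterizations. In (ii), if $x$ is not weakly Pareto there is $y$ with $F(y)\prec F(x)$, and convexity of each $F_i$ gives $\langle\nabla F_i(x),y-x\rangle\leq F_i(y)-F_i(x)<0$, so $d=y-x$ is a common descent direction and $x$ fails to be Pareto critical. In (iii), if $x$ is not Pareto optimal there is $y$ with $F(y)\preceq F(x)$ and $F(y)\neq F(x)$; the latter forces $y\neq x$. The strict-convexity first-order inequality $F_i(y)>F_i(x)+\langle\nabla F_i(x),y-x\rangle$ (valid since $y\neq x$) then yields $\langle\nabla F_i(x),y-x\rangle<F_i(y)-F_i(x)\leq0$ for every $i$, so $d=y-x$ is once more a common descent direction and $x$ is not Pareto critical.

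The only genuinely delicate point is part (iii): mere convexity would give only $\langle\nabla F_i(x),y-x\rangle\leq F_i(y)-F_i(x)$, which for the indices where $F_i(y)=F_i(x)$ degenerates to $\leq0$ and is insufficient to produce a \emph{strict} common descent direction. The key observation is that strict convexity upgrades this to a strict inequality at any $y\neq x$, and that the condition $F(y)\neq F(x)$ in the definition of Pareto optimality guarantees $y\neq x$, so the strict first-order inequality is indeed applicable.
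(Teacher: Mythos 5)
Your proof is correct: the paper gives no proof of this lemma, citing Theorem 3.1 of \cite{FD2009} instead, and your argument (contraposition via common descent directions, with the first-order convexity inequality for (ii) and its strict version for (iii)) is essentially the standard proof found in that reference. You also correctly isolate the one delicate point — that strict convexity is what upgrades the inequality to strict at indices where $F_i(y)=F_i(x)$, making part (iii) go through.
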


\section{Subspace minimization Barzilai-Borwein descent method for MOPs}\label{sec3}
Let us consider the {\it subspace minimization} descent direction subproblem:
\begin{equation}\label{d}
	\min\limits_{d\in\Omega_{k}}\max\limits_{i\in[m]}q_{i}^{k}(d),
\end{equation}
where $q^{k}_{i}(\cdot)$ is approximation model for $F_{i}$ at $x^{k}$, and $\Omega_{k}$ is a subspace. The key issues for the descent direction are how to choose the subspaces and how to obtain the approximate models in corresponding subspaces quickly.
\subsection{Selection of subspace}
Nonlinear conjugate gradient methods utilize the current steepest descent direction and the previous descent direction to construct new descent direction. In order to compare with nonlinear conjugate gradient methods, we denote $\Omega_{k}=Span\{v_{k},d_{k-1}\}$ for $k>1$. The choice for $v_{k}$ is essential, recall that steepest descent direction often accepts a very small stepsize due to the imbalances between objectives, here we set $v_{k}$ the Barzilai-Borwein descent direction \cite{CTY2023a} at $x^{k}$, namely,
\begin{equation}\label{dbb}
	v_{k}:=\mathop{\arg\min}\limits_{v\in\mathbb{R}^{n}}\max\limits_{i\in[m]}\left\{\frac{\langle\nabla F_{i}(x^{k}),v\rangle}{\alpha_{i}^{k}}+\frac{1}{2}\nm{v}^{2}\right\},
\end{equation}
where $\alpha^{k}\in\mathbb{R}^{m}_{++}$ is given by Barzilai-Borwein method:
\begin{equation}\label{bbalpha_k}
	\alpha_{i}^{k}=\left\{
	\begin{aligned}
		&\max\left\{\alpha_{\min},\min\left\{\frac{\langle s_{k-1},y^{k-1}_{i}\rangle}{\nm{s_{k-1}}^{2}}, \alpha_{\max}\right\}\right\}, & \langle s_{k-1},y^{k-1}_{i}\rangle&>0, \\
		&\max\left\{\alpha_{\min},\min\left\{\frac{\nm{y^{k-1}_{i}}}{\nm{s_{k-1}}}, \alpha_{\max}\right\}\right\}, & \langle s_{k-1},y^{k-1}_{i}\rangle&<0, \\
		& \alpha_{\min}, &  \langle s_{k-1},y^{k-1}_{i}\rangle&=0,
	\end{aligned}
	\right.
\end{equation}
for all $i\in[m]$, where $\alpha_{\max}$ is a sufficient large positive constant and $\alpha_{\min}$ is a sufficient small positive constant, $s_{k-1}=x^{k}-x^{k-1},\ y^{k-1}_{i}=\nabla F_{i}({x^{k}})-\nabla F_{i}(x^{k-1}),\ i\in[m].$
\subsection{Selection of approximate model}
In general, iterative methods frequently leverage a quadratic model as it effectively approximates the objective function within a small neighborhood of the minimizer. Striving for a more optimal balance between computational cost and enhanced curvature exploration, we adopt the approximate model proposed in Chen et al. \cite{CTY2024p}:
\begin{equation}\label{bbvm}
	q^{k}_{i}(d):=\frac{\langle\nabla F_{i}(x^{k}),d\rangle}{\bar{\alpha}^{k}_{i}}+\frac{1}{2}\nm{d}^{2}_{B_{k}},
\end{equation}
where $\bar{\alpha}^{k}\in\mathbb{R}^{m}_{++}$ is as follows:
\begin{equation}\label{alpha_k}
	\bar{\alpha}^{k}_{i}=\left\{
	\begin{aligned}
		&\max\left\{\alpha_{\min},\min\left\{\frac{\langle s_{k-1},y^{k-1}_{i}\rangle}{\nm{s_{k-1}}^{2}_{B_{k}}}, \alpha_{\max}\right\}\right\}, & \langle s_{k-1},y^{k-1}_{i}\rangle&>0, \\
		&\max\left\{\alpha_{\min},\min\left\{\frac{\nm{y^{k-1}_{i}}}{\nm{B_{k}s_{k-1}}}, \alpha_{\max}\right\}\right\}, & \langle s_{k-1},y^{k-1}_{i}\rangle&<0, \\
		& \alpha_{\min}, &  \langle s_{k-1},y^{k-1}_{i}\rangle&=0,
	\end{aligned}
	\right.
\end{equation}
and $B_{k}$ is a positive definite matrix. From a preconditioning perspective, as described in \cite{CTY2024p}, a judicious choice for $B_{k}$ is to approximate the variable aggregated Hessian, i.e., $$B_{k}\approx\sum\limits_{i\in[m]}\frac{\bar{\lambda}^{k-1}_{i}}{\bar{\alpha}^{k-1}_{i}}\nabla^{2}F_{i}(x^{k}),$$ where $\bar{\lambda}^{k-1}\in\Delta_{m}$ the dual solution of (\ref{d}) at $x^{k-1}$. 
\subsection{Subspace minimization Barzilai-Borwein descent method}
Recall that $s_{k-1}=x^{k}-x^{k-1}=t_{k-1}d_{k-1}$, by substituting $d=\mu v_{k}+ \nu s_{k-1}$ into (\ref{d}), it can be reformulated as
\begin{equation}\label{d2}
	\min\limits_{(\mu,\nu)^{T}\in\mathbb{R}^{2}}\max\limits_{i\in[m]}\dual{\begin{pmatrix} \dual{\frac{\nabla F_{i}(x^{k})}{\bar{\alpha}^{k}_{i}},v_{k}}\\ \dual{\frac{\nabla F_{i}(x^{k})}{\bar{\alpha}^{k}_{i}},s_{k-1}}\end{pmatrix},\begin{pmatrix} \mu\\ \nu \end{pmatrix}}+\frac{1}{2}\dual{\begin{pmatrix} \mu\\ \nu \end{pmatrix},\begin{pmatrix} \dual{v_{k},B_{k}v_{k}} & \dual{v_{k},B_{k}s_{k-1}}\\ \dual{s_{k-1},B_{k}v_{k}} & \dual{s_{k-1},B_{k}s_{k-1}} \end{pmatrix}\begin{pmatrix} \mu\\ \nu \end{pmatrix}}.
\end{equation}
Recall that $B_{k}\approx\sum_{i\in[m]}\frac{\bar{\lambda}^{k-1}_{i}}{\bar{\alpha}^{k-1}_{i}}\nabla^{2}F_{i}(x^{k})$, to avoid calculating the matrix, we use finite differences of gradient to estimate matrix-vector products, i,e,,
$$B_{k}s_{k-1}\approx\sum\limits_{i\in[m]}\frac{\bar{\lambda}^{k-1}_{i}}{\bar{\alpha}^{k-1}_{i}}(\nabla F_{i}(x^{k})-\nabla F_{i}(x^{k-1})),$$
and 
$$B_{k}v_{k}\approx\sum\limits_{i\in[m]}\frac{\bar{\lambda}^{k-1}_{i}}{\bar{\alpha}^{k-1}_{i}}(\nabla F_{i}(x^{k})-\nabla F_{i}(x^{k}-v_{k})).$$
We denote $$y^{k-1}:=\sum\limits_{i\in[m]}\frac{\bar{\lambda}^{k-1}_{i}}{\bar{\alpha}^{k-1}_{i}}(\nabla F_{i}(x^{k})-\nabla F_{i}(x^{k-1})),$$ 
$$y^{k-1}_{v}:=\sum\limits_{i\in[m]}\frac{\bar{\lambda}^{k-1}_{i}}{\bar{\alpha}^{k-1}_{i}}(\nabla F_{i}(x^{k})-\nabla F_{i}(x^{k}-v_{k})),$$
and
\begin{equation}\label{H}
	H^{k}\approx\begin{pmatrix}\rho^{k}_{1}  &\dual{v_{k},y^{k-1}}\\ \dual{v_{k},y^{k-1}} & \rho^{k}_{2} \end{pmatrix},
\end{equation}
where $\rho^{k}_{1}\approx\dual{v_{k},y_{v}^{k-1}},~\rho^{k}_{2}\approx\dual{s_{k-1},y^{k-1}}.$ Then the {\it subspace minimization Barzilai-Borwein descent} direction $d_{k}=\mu_{k}v_{k}+\nu_{k}s_{k-1}$, where $(\mu_{k},\nu_{k})^{T}\in\mathbb{R}^{2}$ is the optimal solution of the following subproblem:
\begin{equation}\label{d3}
	\min\limits_{(\mu,\nu)^{T}\in\mathbb{R}^{2}}\max\limits_{i\in[m]}\dual{\begin{pmatrix} \dual{\frac{\nabla F_{i}(x^{k})}{\bar{\alpha}^{k}_{i}},v_{k}}\\ \dual{\frac{\nabla F_{i}(x^{k})}{\bar{\alpha}^{k}_{i}},s_{k-1}}\end{pmatrix},\begin{pmatrix} \mu\\ \nu \end{pmatrix}}+\frac{1}{2}\dual{\begin{pmatrix} \mu\\ \nu \end{pmatrix},H^{k}\begin{pmatrix} \mu\\ \nu \end{pmatrix}},
\end{equation}
where  $\bar{\alpha}^{k}\in\mathbb{R}^{m}_{++}$ is as follows:
\begin{equation}\label{alpha_k2}
	\bar{\alpha}^{k}_{i}=\left\{
	\begin{aligned}
		&\max\left\{\alpha_{\min},\min\left\{\frac{\langle s_{k-1},y^{k-1}_{i}\rangle}{\rho^{k}_{2}}, \alpha_{\max}\right\}\right\}, & \langle s_{k-1},y^{k-1}_{i}\rangle&>0, \\
		&\max\left\{\alpha_{\min},\min\left\{\frac{\nm{y^{k-1}_{i}}}{\nm{y^{k-1}}}, \alpha_{\max}\right\}\right\}, & \langle s_{k-1},y^{k-1}_{i}\rangle&<0, \\
		& \alpha_{\min}, &  \langle s_{k-1},y^{k-1}_{i}\rangle&=0.
	\end{aligned}
	\right.
\end{equation}
To ensure that $d_{k}$ is a descent direction, two conditions are required: $\rho^{k}_{2}>0$ and $H^{k}$ is positive definite. Here, we initially assume that these two conditions hold in (\ref{d3}).
Denote 
$$\theta(x^{k}):=	\min\limits_{(\mu,\nu)^{T}\in\mathbb{R}^{2}}\max\limits_{i\in[m]}\dual{\begin{pmatrix} \dual{\frac{\nabla F_{i}(x^{k})}{\bar{\alpha}^{k}_{i}},v_{k}}\\ \dual{\frac{\nabla F_{i}(x^{k})}{\bar{\alpha}^{k}_{i}},s_{k-1}}\end{pmatrix},\begin{pmatrix} \mu\\ \nu \end{pmatrix}}+\frac{1}{2}\dual{\begin{pmatrix} \mu\\ \nu \end{pmatrix},H^{k}\begin{pmatrix} \mu\\ \nu \end{pmatrix}},$$
and
$$\mathcal{D}_{\alpha}(x,d):=\max_{i\in[m]}\dual{\frac{\nabla F_{i}(x)}{\alpha_{i}},d}.$$
Indeed, problem (\ref{d3}) can be equivalently rewritten as the following smooth quadratic problem:
\begin{align*}\tag{QP}\label{QP}
	&\min\limits_{(t,d)\in\mathbb{R}\times\mathbb{R}^{n}}\ t+\frac{1}{2}\dual{\begin{pmatrix} \mu\\ \nu \end{pmatrix},H^{k}\begin{pmatrix} \mu\\ \nu \end{pmatrix}},\\
	&\ \ \ \ \ \ \mathrm{ s.t.} \ \ \ \ \dual{\begin{pmatrix} \dual{\frac{\nabla F_{i}(x^{k})}{\bar{\alpha}^{k}_{i}},v_{k}}\\ \dual{\frac{\nabla F_{i}(x^{k})}{\bar{\alpha}^{k}_{i}},s_{k-1}}\end{pmatrix},\begin{pmatrix} \mu\\ \nu \end{pmatrix}} \leq t,\ i\in[m].
\end{align*}
As described in \cite{CTY2023a}, the problem (\ref{QP}) can be efficiently solved via its dual. It is worth noting that $\bar{\lambda}^{k-1}$ should represent the dual solution of (\ref{QP}) at $x^{k-1}$ in this setting. By KKT conditions, we have
\begin{equation}\label{theta}
	\theta(x^{k}) = \frac{1}{2}\mathcal{D}_{\bar{\alpha}^{k}}(x^{k},d_{k}),
\end{equation}
and 
\begin{equation}\label{D}
	\mathcal{D}_{\bar{\alpha}^{k}}(x^{k},d_{k})=-\dual{\begin{pmatrix} \mu_{k}\\ \nu_{k} \end{pmatrix},H^{k}\begin{pmatrix} \mu_{k}\\ \nu_{k} \end{pmatrix}}.
\end{equation}
The remaining questions are: How do we ensure that $\rho^{k}_{2}>0$ and $H^{k}$ is positive definite?
\subsubsection{Selection of $\rho^{k}_{2}$}
 Note that in nonconvex cases $\dual{s_{k-1},y^{k-1}}\leq0$ can hold, then we set
\begin{equation}\label{rho2}
	\rho_{2}^{k}=\left\{
	\begin{aligned}
		\begin{split}
			&\dual{s_{k-1},y^{k-1}}, & &\dual{s_{k-1},y^{k-1}}>0, \\
			&\mathcal{D}_{\bar{\alpha}^{k-1}}(x^{k},s_{k-1})-\sum_{i\in[m]}\bar{\lambda}^{k-1}_{i}\dual{\nabla F_{i}(x^{k-1})/{\bar{\alpha}^{k-1}_{i}},s_{k-1}}, & &~~~~~~~~~\text{~otherwise}.
		\end{split}
	\end{aligned}
	\right.
\end{equation}
We introduce the following Wolfe line search to ensure $\rho^{k}_{2}>0$.
\begin{equation}\label{wolfe1}
	(F_{i}(x^{k}+td_{k})-F_{i}(x^{k}))/\bar{\alpha}^{k}_{i}\leq \sigma_{1} t\mathcal{D}_{\bar{\alpha}^{k}}(x^{k},d_{k}),~\forall i\in[m],
\end{equation}
\begin{equation}\label{wolfe2}
	\mathcal{D}_{\bar{\alpha}^{k}}(x^{k}+td_{k},d_{k})\geq\sigma_{2}\mathcal{D}_{\bar{\alpha}^{k}}(x^{k},d_{k}).
\end{equation}
To ensure the  Wolfe line search is well-defined, we require the following assumption.
\begin{assumption}\label{a2}
	For any $x^{0}\in\mathbb{R}^{n}$, the level set $\mathcal{L}_{F}(x^{0})=\{x:\ F(x)\preceq F(x^{0})\}$ is compact.
\end{assumption}
\begin{proposition}
	Suppose that Assumption \ref{a2} holds. Let $d_{k}$ is a descent direction, $0<\sigma_{1}\leq\sigma_{2}<1$. Then, there exists an interval $[t_{l},t_{u}]$, with $0<t_{l}<t_{u}$, such that for all $t\in[t_{l},t_{u}]$ equations (\ref{wolfe1}) and (\ref{wolfe2}) hold.
\end{proposition}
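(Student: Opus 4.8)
\section*{Proof proposal}

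The plan is to reduce the two vector Wolfe conditions to a one--dimensional analysis along the ray $t\mapsto x^{k}+td_{k}$. Write $\bar{D}:=\mathcal{D}_{\bar{\alpha}^{k}}(x^{k},d_{k})$, which is strictly negative because $d_{k}$ is a descent direction, and introduce the scalar merit function
$$\psi(t):=\max_{i\in[m]}\frac{F_{i}(x^{k}+td_{k})-F_{i}(x^{k})}{\bar{\alpha}^{k}_{i}},\qquad t\ge0,$$
so that (\ref{wolfe1}) is exactly $\psi(t)\le\sigma_{1}t\bar{D}$, while (\ref{wolfe2}) is exactly $\mathcal{D}_{\bar{\alpha}^{k}}(x^{k}+td_{k},d_{k})\ge\sigma_{2}\bar{D}$. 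The function $\psi$ is a maximum of finitely many $C^{1}$ functions, hence locally Lipschitz; since every index is active at $t=0$, its right derivative is $\psi'_{+}(0)=\max_{i\in[m]}\langle\nabla F_{i}(x^{k}),d_{k}\rangle/\bar{\alpha}^{k}_{i}=\bar{D}$, and the map $t\mapsto\mathcal{D}_{\bar{\alpha}^{k}}(x^{k}+td_{k},d_{k})$ is continuous.

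First I would locate the first Armijo boundary point. Setting $g(t):=\psi(t)-\sigma_{1}t\bar{D}$, we have $g(0)=0$ and $g'_{+}(0)=(1-\sigma_{1})\bar{D}<0$, so $g<0$ on some $(0,\delta)$, i.e. (\ref{wolfe1}) holds strictly for small $t$. To show $g$ must eventually become positive I would invoke Assumption~\ref{a2}: the sublevel set $\mathcal{L}_{F}(x^{k})\subseteq\mathcal{L}_{F}(x^{0})$ is compact, so $F$ is bounded below on it; if the ray leaves this set at some $t$ then some component strictly increases, giving $\psi(t)>0>\sigma_{1}t\bar{D}$, whereas if the ray stays in it then $\psi$ is bounded below while $\sigma_{1}t\bar{D}\to-\infty$. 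In either case $g(t)>0$ for some $t>0$, so by continuity $g$ has a smallest positive root $t^{*}>0$ with $g<0$ on $(0,t^{*})$; thus (\ref{wolfe1}) holds on all of $(0,t^{*}]$.

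Next I would produce a curvature--feasible point. Being Lipschitz on $[0,t^{*}]$, $\psi$ is absolutely continuous, so $\int_{0}^{t^{*}}\psi'(s)\,ds=\psi(t^{*})-\psi(0)=\sigma_{1}t^{*}\bar{D}$, and at a.e.\ $s$ one has $\psi'(s)=\langle\nabla F_{j(s)}(x^{k}+sd_{k}),d_{k}\rangle/\bar{\alpha}^{k}_{j(s)}\le\mathcal{D}_{\bar{\alpha}^{k}}(x^{k}+sd_{k},d_{k})$ for an active index $j(s)$. If $\mathcal{D}_{\bar{\alpha}^{k}}(x^{k}+sd_{k},d_{k})\le\sigma_{2}\bar{D}$ held for every $s\in(0,t^{*}]$, then integrating $\psi'(s)\le\sigma_{2}\bar{D}\le\sigma_{1}\bar{D}$ (using $\sigma_{1}\le\sigma_{2}$ and $\bar{D}<0$) would force $\psi(t)=\sigma_{1}t\bar{D}$ throughout $(0,t^{*})$, contradicting $g<0$ there. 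Hence there exists $\hat{t}\in(0,t^{*}]$ with $\mathcal{D}_{\bar{\alpha}^{k}}(x^{k}+\hat{t}d_{k},d_{k})>\sigma_{2}\bar{D}$, that is, (\ref{wolfe2}) holds \emph{strictly} at $\hat{t}$.

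Finally I would pass from a point to an interval by continuity: since $t\mapsto\mathcal{D}_{\bar{\alpha}^{k}}(x^{k}+td_{k},d_{k})$ is continuous and strictly exceeds $\sigma_{2}\bar{D}$ at $\hat{t}$, (\ref{wolfe2}) holds on an open neighbourhood of $\hat{t}$; intersecting it with $(0,t^{*}]$, where (\ref{wolfe1}) already holds, yields a nondegenerate $[t_{l},t_{u}]$ with $0<t_{l}<t_{u}$ (a two--sided subinterval if $\hat{t}<t^{*}$, a left half--neighbourhood $[t_{l},t^{*}]$ if $\hat{t}=t^{*}$). I expect the main obstacle to be precisely this curvature/interval step: because $\psi$ is only a maximum of smooth functions it is not differentiable, so a naive mean value theorem is unavailable, and the borderline case $\sigma_{1}=\sigma_{2}$ would seem to give only a single curvature point --- it is resolved above by arguing through absolute continuity that strict Armijo on $(0,t^{*})$ forbids $\mathcal{D}_{\bar{\alpha}^{k}}$ from being uniformly $\le\sigma_{2}\bar{D}$, thereby securing a point where (\ref{wolfe2}) holds strictly and hence on a whole interval.
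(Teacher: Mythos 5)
Your proof is correct, and it is worth noting that the paper itself gives no argument here: its ``proof'' consists of the single remark that the result is similar to \cite[Proposition 2]{LM2023}. So the right comparison is with the standard existence proof for multiobjective Wolfe stepsizes behind that citation. That argument shares your skeleton --- strict Armijo feasibility near $t=0$ (from $g_{+}'(0)=(1-\sigma_{1})\bar{D}<0$), compactness of the level set (Assumption \ref{a2}) to force a first crossing point $t^{*}$ of $\psi(t)=\max_{i}(F_{i}(x^{k}+td_{k})-F_{i}(x^{k}))/\bar{\alpha}^{k}_{i}$ with the line $\sigma_{1}t\bar{D}$, a curvature point inside $(0,t^{*}]$, and continuity to inflate that point into an interval --- but it obtains the curvature point differently: at $t^{*}$ some single component $j$ attains the Armijo bound with equality, and the classical mean value theorem applied to the smooth scalar function $t\mapsto (F_{j}(x^{k}+td_{k})-F_{j}(x^{k}))/\bar{\alpha}^{k}_{j}$ gives $\hat{t}\in(0,t^{*})$ with derivative equal to $\sigma_{1}\bar{D}$, whence $\mathcal{D}_{\bar{\alpha}^{k}}(x^{k}+\hat{t}d_{k},d_{k})\geq\sigma_{1}\bar{D}\geq\sigma_{2}\bar{D}$. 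That route is shorter, but it only delivers a non-strict inequality in the borderline case $\sigma_{1}=\sigma_{2}$, which the proposition explicitly allows ($0<\sigma_{1}\leq\sigma_{2}<1$); without strictness, continuity does not immediately upgrade the single point to an interval. Your replacement of the mean value theorem by absolute continuity of the (nonsmooth) max function $\psi$ --- showing that $\mathcal{D}_{\bar{\alpha}^{k}}(x^{k}+sd_{k},d_{k})\leq\sigma_{2}\bar{D}$ on all of $(0,t^{*}]$ would force $\psi'(s)=\sigma_{1}\bar{D}$ a.e.\ and hence $\psi(t)=\sigma_{1}t\bar{D}$, contradicting strict Armijo on $(0,t^{*})$ --- closes exactly this gap: it produces a point where (\ref{wolfe2}) holds \emph{strictly}, and therefore a nondegenerate interval $[t_{l},t_{u}]$, uniformly over the whole admissible parameter range. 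The two auxiliary facts you invoke are both sound: $\psi_{+}'(0)=\bar{D}$ because every index is active at $t=0$, and at any point of differentiability $\psi'$ coincides with the derivative of an active component (apply the local-minimum argument to $\psi-\phi_{j}\geq0$). So your proof stands as written, and is in fact more complete than the argument the paper points to.
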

\begin{proof}
	The proof is similar to
	that in \cite[Proposition 2]{LM2023}, we omit it here.
\end{proof}
\begin{proposition}
	If the stepsize is obtained by Wolfe line search, then $\rho^{k}_{2}$ in (\ref{rho2}) is positive. 
\end{proposition}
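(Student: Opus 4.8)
The plan is to treat the two branches of the definition (\ref{rho2}) separately. The first branch is immediate: when $\langle s_{k-1},y^{k-1}\rangle>0$ we have $\rho_2^k=\langle s_{k-1},y^{k-1}\rangle>0$ by definition, with no appeal to the line search. All the work lies in the second (\emph{otherwise}) branch, where $\langle s_{k-1},y^{k-1}\rangle\leq 0$ and
$$\rho_2^k=\mathcal{D}_{\bar{\alpha}^{k-1}}(x^k,s_{k-1})-\sum_{i\in[m]}\bar{\lambda}^{k-1}_i\langle\nabla F_i(x^{k-1})/\bar{\alpha}^{k-1}_i,s_{k-1}\rangle.$$
The strategy is to recognize the subtracted sum as a directional quantity at $x^{k-1}$ and then compare the two terms through the Wolfe curvature condition imposed at iteration $k-1$.

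First I would establish the identity
$$\sum_{i\in[m]}\bar{\lambda}^{k-1}_i\langle\nabla F_i(x^{k-1})/\bar{\alpha}^{k-1}_i,d_{k-1}\rangle=\mathcal{D}_{\bar{\alpha}^{k-1}}(x^{k-1},d_{k-1}),$$
which is the linchpin of the argument. This follows from the KKT conditions of the subproblem (\ref{QP}) solved at $x^{k-1}$: by complementary slackness the dual multiplier $\bar{\lambda}^{k-1}\in\Delta_m$ is supported on the active indices, and for each active index the constraint holds with equality at the optimal value $t^{\ast}=\max_{i\in[m]}\langle\nabla F_i(x^{k-1})/\bar{\alpha}^{k-1}_i,d_{k-1}\rangle=\mathcal{D}_{\bar{\alpha}^{k-1}}(x^{k-1},d_{k-1})$; since $\bar{\lambda}^{k-1}$ sums to one, the convex combination collapses to this common value. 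Multiplying through by $t_{k-1}>0$ and using $s_{k-1}=t_{k-1}d_{k-1}$ gives
$$\sum_{i\in[m]}\bar{\lambda}^{k-1}_i\langle\nabla F_i(x^{k-1})/\bar{\alpha}^{k-1}_i,s_{k-1}\rangle=\mathcal{D}_{\bar{\alpha}^{k-1}}(x^{k-1},s_{k-1}),$$
so the \emph{otherwise} branch reduces to $\rho_2^k=\mathcal{D}_{\bar{\alpha}^{k-1}}(x^k,s_{k-1})-\mathcal{D}_{\bar{\alpha}^{k-1}}(x^{k-1},s_{k-1})$.

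Next I would invoke the curvature condition (\ref{wolfe2}) at iteration $k-1$, namely $\mathcal{D}_{\bar{\alpha}^{k-1}}(x^{k-1}+t_{k-1}d_{k-1},d_{k-1})\geq\sigma_2\mathcal{D}_{\bar{\alpha}^{k-1}}(x^{k-1},d_{k-1})$. Because $\mathcal{D}_{\bar{\alpha}}(x,\cdot)$ is a maximum of linear functionals, it is positively homogeneous of degree one in its direction argument; multiplying the curvature inequality by $t_{k-1}>0$ and using $x^k=x^{k-1}+t_{k-1}d_{k-1}$ and $s_{k-1}=t_{k-1}d_{k-1}$ yields $\mathcal{D}_{\bar{\alpha}^{k-1}}(x^k,s_{k-1})\geq\sigma_2\mathcal{D}_{\bar{\alpha}^{k-1}}(x^{k-1},s_{k-1})$. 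Substituting into the reduced expression gives
$$\rho_2^k\geq(\sigma_2-1)\,\mathcal{D}_{\bar{\alpha}^{k-1}}(x^{k-1},s_{k-1}).$$
To finish, I would use that $d_{k-1}$ is a descent direction, so $\mathcal{D}_{\bar{\alpha}^{k-1}}(x^{k-1},d_{k-1})<0$ (consistent with (\ref{theta})--(\ref{D}), where this quantity equals $-\langle(\mu_{k-1},\nu_{k-1})^T,H^{k-1}(\mu_{k-1},\nu_{k-1})^T\rangle<0$ under positive definiteness of $H^{k-1}$); hence $\mathcal{D}_{\bar{\alpha}^{k-1}}(x^{k-1},s_{k-1})=t_{k-1}\mathcal{D}_{\bar{\alpha}^{k-1}}(x^{k-1},d_{k-1})<0$, and combined with $\sigma_2-1<0$ the right-hand side is a product of two negative numbers, whence $\rho_2^k>0$.

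The main obstacle is the KKT identity relating the dual weights $\bar{\lambda}^{k-1}$ to the primal value $\mathcal{D}_{\bar{\alpha}^{k-1}}(x^{k-1},d_{k-1})$; once that is in hand the rest is homogeneity bookkeeping and sign-chasing. The one point requiring care is consistency of indices: the $\bar{\alpha}^{k-1}$ entering $\rho_2^k$ must be exactly the step-size parameters used in the line search at iteration $k-1$, so that the curvature condition and the KKT identity refer to the same subproblem. Granting that consistency, the two branches together establish $\rho_2^k>0$.
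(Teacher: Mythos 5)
Your proof is correct. Note that the paper itself gives no argument here --- its ``proof'' is the single line ``The assertions are obvious, we omit the proof here'' --- so your write-up supplies exactly the reasoning the authors left implicit, and it is the natural one: the \emph{otherwise} branch of (\ref{rho2}) is engineered precisely so that the curvature condition (\ref{wolfe2}) at iteration $k-1$ forces positivity. Your chain of steps (KKT identity at $x^{k-1}$, positive homogeneity of $\mathcal{D}_{\bar{\alpha}}(x,\cdot)$ in the direction argument, Wolfe curvature scaled by $t_{k-1}$, and the sign of $\mathcal{D}_{\bar{\alpha}^{k-1}}(x^{k-1},d_{k-1})$ coming from (\ref{D}) with $H^{k-1}\succ 0$, respectively from $\mathcal{D}_{\alpha^{0}}(x^{0},v_{0})=-\nm{v_{0}}^{2}$ when $k-1=0$) all check out, including the index bookkeeping you flag at the end. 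One small remark: the complementary-slackness \emph{equality} $\sum_{i}\bar{\lambda}^{k-1}_{i}\dual{\nabla F_{i}(x^{k-1})/\bar{\alpha}^{k-1}_{i},d_{k-1}}=\mathcal{D}_{\bar{\alpha}^{k-1}}(x^{k-1},d_{k-1})$ is more than you need; since $\bar{\lambda}^{k-1}\in\Delta_{m}$, the elementary bound ``convex combination $\leq$ max'' already gives $\sum_{i}\bar{\lambda}^{k-1}_{i}\dual{\nabla F_{i}(x^{k-1})/\bar{\alpha}^{k-1}_{i},s_{k-1}}\leq\mathcal{D}_{\bar{\alpha}^{k-1}}(x^{k-1},s_{k-1})$, which yields the same lower bound $\rho_{2}^{k}\geq(\sigma_{2}-1)\mathcal{D}_{\bar{\alpha}^{k-1}}(x^{k-1},s_{k-1})>0$ without invoking the structure of the dual solution. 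That variant is slightly more robust, since it does not depend on $\bar{\lambda}^{k-1}$ being an exact dual solution of (\ref{QP}).
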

\begin{proof}
	The assertions are obvious, we omit the proof here.
\end{proof}

\subsubsection{Selection of $H^{k}$}
To guarantee the positive definiteness of $H^{k}$, adopting the strategy proposed by Yuan and Stoer \cite{YS1995}:
$$H^{k}=\begin{pmatrix}\rho^{k}_{1}  &\dual{v_{k},y^{k-1}}\\ \dual{v_{k},y^{k-1}} & \rho^{k}_{2} \end{pmatrix},$$
where $$\rho^{k}_{1}=\frac{2\dual{v_{k},y^{k-1}}^{2}}{\rho^{k}_{2}}.$$ Another powerful strategy proposed by Dai and Kou \cite{DK2016}: $$\rho^{k}_{1}=\tau^{k}\frac{\nm{y^{k-1}}^{2}}{\rho^{k}_{2}}\nm{v_{k}}^{2}(\tau^{k}>1).$$ However, it is important to note that both methods lack global convergence in non-convex cases.  In global convergence analysis we will require the {\it sufficient descent condition} \cite{LP2018}:
\begin{equation}
	\mathcal{D}_{\bar{\alpha}^{k}}(x^{k},d_{k})\leq -c\nm{v_{k}}^{2},
\end{equation}
for some $c>0$ and for all $k\geq0$. Motivated by \cite{LLL2024}, we provide a sufficient condition on $H^{k}$ to ensure that the obtained search direction $d_{k}$ is a sufficient descent direction.

\begin{proposition}
	Let $\{H^{k}\}\in\mathbb{R}^{2\times2}$ be the sequence of symmetric matrices in (\ref{d3}) and assume that there exist constants $0<c_{1}\leq c_{2}$ such that
	\begin{equation}\label{cond}
		c_{1}\leq\lambda_{\min}(D_{k}^{-1}H^{k}D_{k}^{-1})\leq\lambda_{\max}(D_{k}^{-1}H^{k}D_{k}^{-1})\leq c_{2}
	\end{equation}
holds for all $k$, where
$$D_{k}=\begin{pmatrix} \nm{v_{k}} & 0\\ 0 & \nm{s_{k-1}} \end{pmatrix}.$$
Let $d_{k}=\mu_{k}v_{k}+\nu_{k}s_{k-1}$, where $(\mu_{k},\nu_{k})^{T}$ is the solution of (\ref{d3}). Then, the search direction $d_{k}$ satisfies the following conditions:
\begin{equation}\label{ine1}
	\mathcal{D}_{\bar{\alpha}^{k}}(x^{k},d_{k}) \leq -\frac{c_{1}}{2}\nm{d_{k}}^{2},
\end{equation} 

\begin{equation}\label{ine2}
\mathcal{D}_{\bar{\alpha}^{k}}(x^{k},d_{k}) \leq -\min\limits_{i\in[m]}(\frac{{\alpha}^{k}_{i}}{\bar{\alpha}^{k}_{i}})^{2}\frac{\nm{v_{k}}^{2}}{c_{2}},
\end{equation} 
and
\begin{equation}\label{ine3}
	\frac{\min_{i\in[m]}({{\alpha}^{k}_{i}}/{\bar{\alpha}^{k}_{i}})^{2}}{c_{2}\max_{i\in[m]}{{\alpha}^{k}_{i}}/{\bar{\alpha}^{k}_{i}}}\nm{v_{k}}\leq\nm{d_{k}}\leq\frac{2\max_{i\in[m]}{{\alpha}^{k}_{i}}/{\bar{\alpha}^{k}_{i}}}{c_{1}}\nm{v_{k}}.
\end{equation}
\end{proposition}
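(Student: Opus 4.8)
The plan is to reduce everything to the scalar quantity $\mathcal{D}_{\bar{\alpha}^{k}}(x^{k},d_{k})$ and exploit the two identities $\theta(x^{k})=\tfrac12\mathcal{D}_{\bar{\alpha}^{k}}(x^{k},d_{k})$ in \eqref{theta} and $\mathcal{D}_{\bar{\alpha}^{k}}(x^{k},d_{k})=-\langle(\mu_{k},\nu_{k})^{T},H^{k}(\mu_{k},\nu_{k})^{T}\rangle$ in \eqref{D}. Writing $p_{k}=(\mu_{k},\nu_{k})^{T}$ and performing the congruence $\tilde{p}_{k}:=D_{k}p_{k}$, the spectral hypothesis \eqref{cond} turns the quadratic form into a clean sandwich $c_{1}\nm{\tilde{p}_{k}}^{2}\le\langle p_{k},H^{k}p_{k}\rangle\le c_{2}\nm{\tilde{p}_{k}}^{2}$, since $\langle p_{k},H^{k}p_{k}\rangle=\langle\tilde{p}_{k},(D_{k}^{-1}H^{k}D_{k}^{-1})\tilde{p}_{k}\rangle$ and $\nm{\tilde{p}_{k}}^{2}=\mu_{k}^{2}\nm{v_{k}}^{2}+\nu_{k}^{2}\nm{s_{k-1}}^{2}$. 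All three assertions then follow by combining this sandwich with elementary properties of the Barzilai--Borwein direction $v_{k}$.

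For \eqref{ine1}, I would note that the triangle inequality gives $\nm{d_{k}}\le|\mu_{k}|\nm{v_{k}}+|\nu_{k}|\nm{s_{k-1}}\le\sqrt{2}\,\nm{\tilde{p}_{k}}$, hence $\nm{d_{k}}^{2}\le2\nm{\tilde{p}_{k}}^{2}$; feeding this and the lower spectral bound into \eqref{D} yields $\mathcal{D}_{\bar{\alpha}^{k}}(x^{k},d_{k})=-\langle p_{k},H^{k}p_{k}\rangle\le-c_{1}\nm{\tilde{p}_{k}}^{2}\le-\tfrac{c_{1}}{2}\nm{d_{k}}^{2}$. For \eqref{ine2}, I would restrict the subproblem \eqref{d3} to the ray $\{(\mu,0):\mu\ge0\}$, i.e. $d=\mu v_{k}$. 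On this ray the objective equals $\mu\,\mathcal{D}_{\bar{\alpha}^{k}}(x^{k},v_{k})+\tfrac12\mu^{2}H^{k}_{11}$; using the defining inequality of $v_{k}$ from \eqref{dbb}, namely $\langle\nabla F_{i}(x^{k})/\alpha_{i}^{k},v_{k}\rangle\le-\nm{v_{k}}^{2}$ for all $i$, one obtains $\mathcal{D}_{\bar{\alpha}^{k}}(x^{k},v_{k})\le-\min_{i}(\alpha_{i}^{k}/\bar{\alpha}_{i}^{k})\nm{v_{k}}^{2}$, while the diagonal entry obeys $H^{k}_{11}=\nm{v_{k}}^{2}(D_{k}^{-1}H^{k}D_{k}^{-1})_{11}\le c_{2}\nm{v_{k}}^{2}$. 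Minimizing the resulting one-dimensional quadratic over $\mu>0$ and using $\theta(x^{k})\le$ (its value on the ray) gives \eqref{ine2}.

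The crux --- and the step I expect to be the main obstacle --- is the single estimate
\[
	|\mathcal{D}_{\bar{\alpha}^{k}}(x^{k},d_{k})|\le\max_{i\in[m]}\frac{\alpha_{i}^{k}}{\bar{\alpha}_{i}^{k}}\,\nm{v_{k}}\,\nm{d_{k}},
\]
from which both bounds in \eqref{ine3} drop out. The tempting route, bounding $\mathcal{D}_{\bar{\alpha}^{k}}(x^{k},d_{k})=\langle\sum_{i}\lambda_{i}^{k}\nabla F_{i}(x^{k})/\bar{\alpha}_{i}^{k},d_{k}\rangle$ directly by Cauchy--Schwarz, fails: the dual combination of gradients is a \emph{non-convex} combination whose norm cannot be controlled by $\nm{v_{k}}$, because the minimum-norm property of $v_{k}$ pushes the inequality the wrong way. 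Instead I would first peel off the scaling. Since $d_{k}$ is a descent direction, every $\langle\nabla F_{i}(x^{k})/\bar{\alpha}_{i}^{k},d_{k}\rangle$ is negative, so putting $x_{i}:=-\langle\nabla F_{i}(x^{k})/\alpha_{i}^{k},d_{k}\rangle>0$ and $\beta_{i}:=\alpha_{i}^{k}/\bar{\alpha}_{i}^{k}$, one has $|\mathcal{D}_{\bar{\alpha}^{k}}(x^{k},d_{k})|=\min_{i}\beta_{i}x_{i}\le(\max_{i}\beta_{i})\min_{i}x_{i}=\max_{i}\beta_{i}\,|\mathcal{D}_{\alpha^{k}}(x^{k},d_{k})|$. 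It then remains to bound $|\mathcal{D}_{\alpha^{k}}(x^{k},d_{k})|=\min_{i}x_{i}$ by $\nm{v_{k}}\nm{d_{k}}$; here I would invoke the Barzilai--Borwein dual multipliers $\lambda^{\mathrm{BB}}\in\Delta_{m}$ with $v_{k}=-\sum_{i}\lambda_{i}^{\mathrm{BB}}\nabla F_{i}(x^{k})/\alpha_{i}^{k}$, so that $\min_{i}x_{i}\le\sum_{i}\lambda_{i}^{\mathrm{BB}}x_{i}=\langle v_{k},d_{k}\rangle\le\nm{v_{k}}\nm{d_{k}}$. The whole point of the min-over-objectives manipulation is that it extracts the harmless factor $\max_{i}\beta_{i}$ and transfers the hard part to the $\alpha^{k}$-scaled model, where the dual of $v_{k}$ is exactly the object available.

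Finally I would assemble \eqref{ine3}. The upper bound follows by chaining \eqref{ine1} with the crux, $\tfrac{c_{1}}{2}\nm{d_{k}}^{2}\le|\mathcal{D}_{\bar{\alpha}^{k}}(x^{k},d_{k})|\le\max_{i}\beta_{i}\,\nm{v_{k}}\nm{d_{k}}$, and cancelling one factor of $\nm{d_{k}}$. The lower bound follows by chaining \eqref{ine2} with the crux, $\tfrac{\min_{i}\beta_{i}^{2}}{c_{2}}\nm{v_{k}}^{2}\le|\mathcal{D}_{\bar{\alpha}^{k}}(x^{k},d_{k})|\le\max_{i}\beta_{i}\,\nm{v_{k}}\nm{d_{k}}$, and cancelling one factor of $\nm{v_{k}}$. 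The degenerate case $v_{k}=0$ (a Pareto critical point, whence $d_{k}=0$) renders every inequality trivial and is best dispatched at the outset.
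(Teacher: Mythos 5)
Your proposal is correct and follows essentially the same route as the paper's own proof: the $D_k$-congruence sandwich for \eqref{ine1}, restriction of the subproblem \eqref{d3} to the $v_k$-ray combined with $\langle\nabla F_{i}(x^{k}),v_{k}\rangle\leq-\alpha_{i}^{k}\nm{v_{k}}^{2}$ and the identity \eqref{theta} for \eqref{ine2}, and the same chain (extract $\max_{i}\alpha_{i}^{k}/\bar{\alpha}_{i}^{k}$, pass to the Barzilai--Borwein dual combination $v_{k}=-\sum_{i}\lambda^{\mathrm{BB}}_{i}\nabla F_{i}(x^{k})/\alpha_{i}^{k}$, apply Cauchy--Schwarz) for the key bound $\lvert\mathcal{D}_{\bar{\alpha}^{k}}(x^{k},d_{k})\rvert\leq\max_{i}(\alpha_{i}^{k}/\bar{\alpha}_{i}^{k})\nm{v_{k}}\nm{d_{k}}$, which is then combined with \eqref{ine1} and \eqref{ine2} to give \eqref{ine3}. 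Your sign-flipped phrasing (minima of positive quantities rather than maxima of nonpositive ones) is only a cosmetic difference from the paper's argument.
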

\begin{proof}
	From (\ref{D}), we derive that
	\begin{align*}
		-\mathcal{D}_{\bar{\alpha}^{k}}(x^{k},d_{k})&=\dual{\begin{pmatrix} \mu_{k}\\ \nu_{k} \end{pmatrix},H^{k}\begin{pmatrix} \mu_{k}\\ \nu_{k} \end{pmatrix}}\\
		&=\dual{D_{k}\begin{pmatrix} \mu_{k}\\ \nu_{k} \end{pmatrix},D_{k}^{-1}H^{k}D_{k}^{-1}D_{k}\begin{pmatrix} \mu_{k}\\ \nu_{k} \end{pmatrix}}\\
		&\geq c_{1}(\mu_{k}^{2}\nm{v_{k}}^{2}+\nu_{k}^{2}\nm{s_{k-1}}^{2})\\
		&\geq\frac{c_{1}}{2}\nm{d_{k}}^{2}.
	\end{align*}
This implies inequality (\ref{ine1}).
	We use the relation (\ref{cond}) to get
	\begin{align*}
		\theta(x^{k})&=\min\limits_{(\mu,\nu)^{T}\in\mathbb{R}^{2}}\max\limits_{i\in[m]}\dual{\begin{pmatrix} \dual{\frac{\nabla F_{i}(x^{k})}{\bar{\alpha}^{k}_{i}},v_{k}}\\ \dual{\frac{\nabla F_{i}(x^{k})}{\bar{\alpha}^{k}_{i}},s_{k-1}}\end{pmatrix},\begin{pmatrix} \mu\\ \nu \end{pmatrix}}+\frac{1}{2}\dual{\begin{pmatrix} \mu\\ \nu \end{pmatrix},H^{k}\begin{pmatrix} \mu\\ \nu \end{pmatrix}}\\
		&\leq\min\limits_{(\mu,\nu)^{T}\in\mathbb{R}^{2}}\max\limits_{i\in[m]}\dual{\begin{pmatrix} \dual{\frac{\nabla F_{i}(x^{k})}{\bar{\alpha}^{k}_{i}},v_{k}}\\ \dual{\frac{\nabla F_{i}(x^{k})}{\bar{\alpha}^{k}_{i}},s_{k-1}}\end{pmatrix},\begin{pmatrix} \mu\\ \nu \end{pmatrix}}+\frac{c_{2}}{2}\dual{D_{k}\begin{pmatrix} \mu\\ \nu \end{pmatrix},D_{k}\begin{pmatrix} \mu\\ \nu \end{pmatrix}}\\
		&\leq\min\limits_{\mu\in\mathbb{R}}\max\limits_{i\in[m]}\dual{\frac{\nabla F_{i}(x^{k})}{\bar{\alpha}^{k}_{i}},v_{k}}\mu+\frac{c_{2}}{2}\nm{v_{k}}^{2}\mu^{2}\\
		&\leq\min\limits_{\mu\in\mathbb{R}}\max\limits_{i\in[m]}-\frac{{\alpha}^{k}_{i}}{\bar{\alpha}^{k}_{i}}\nm{v_{k}}^{2}\mu+\frac{c_{2}}{2}\nm{v_{k}}^{2}\mu^{2}\\
		&=-\min\limits_{i\in[m]}(\frac{{\alpha}^{k}_{i}}{\bar{\alpha}^{k}_{i}})^{2}\frac{\nm{v_{k}}^{2}}{2c_{2}},
	\end{align*}
where the second inequality is given by setting $\nu=0$, and the second inequality is due to $\dual{\nabla F_{i}(x^{k}),v_{k}}\leq-\alpha^{k}_{i}\nm{v_{k}}^{2}$. Plugging the preceding bound into (\ref{theta}) gives inequality (\ref{ine2}). By the definition of $\mathcal{D}_{\alpha}(x,d)$, we have
\begin{equation}
	\begin{aligned}
		\mathcal{D}_{\bar{\alpha}^{k}}(x^{k},d_{k})&=\max_{i\in[m]}\frac{{\alpha}^{k}_{i}}{\bar{\alpha}^{k}_{i}}\dual{\frac{\nabla F_{i}(x^{k})}{\alpha^{k}_{i}},d_{k}}\\
		&\geq\max_{i\in[m]}\frac{{\alpha}^{k}_{i}}{\bar{\alpha}^{k}_{i}}\max_{i\in[m]}\dual{\frac{\nabla F_{i}(x^{k})}{\alpha^{k}_{i}},d_{k}}\\
		&\geq\max_{i\in[m]}\frac{{\alpha}^{k}_{i}}{\bar{\alpha}^{k}_{i}}\dual{\sum_{i\in[m]}{\lambda^{k}_{BB}}_{i}\frac{\nabla F_{i}(x^{k})}{\alpha^{k}_{i}},d_{k}}\\
		&\geq\max_{i\in[m]}\frac{{\alpha}^{k}_{i}}{\bar{\alpha}^{k}_{i}}\dual{-v_{k},d_{k}}\\
		&\geq-\max_{i\in[m]}\frac{{\alpha}^{k}_{i}}{\bar{\alpha}^{k}_{i}}\nm{v_{k}}\nm{d_{k}},
	\end{aligned}
\end{equation}
where the first inequality is due to the fact that $\max_{i\in[m]}\dual{{\nabla F_{i}(x^{k})}/{\alpha^{k}_{i}},d_{k}}\leq0$.
By substituting the latter bound into (\ref{ine1}) and (\ref{ine2}), respectively, we derive the relation (\ref{ine3}).
\end{proof}

\begin{remark}
	If $m=1$, by setting $\bar{\alpha}^{k}=\alpha^{k}=1$, the relations (\ref{ine2}) and (\ref{ine3}) reduce to (4.17) and (4.18) in \cite{LLL2024}, respectively.
\end{remark}

In addition to ensuring positive definiteness, the selected $H^{k}$ should also capture the problem's local curvature information in the low-dimensional subspace. Therefore, we set 
\begin{equation}\label{hk}
	H^{k}=\begin{pmatrix}\rho^{k}_{1}  &\dual{v_{k},y^{k-1}}\\ \dual{v_{k},y^{k-1}} & \rho^{k}_{2} \end{pmatrix},
\end{equation}
where
\begin{equation}\label{rho1}
	\rho_{1}^{k}=\left\{
	\begin{aligned}
		\begin{split}
			&\dual{v_{k},y^{k-1}_{v}}, & &\dual{v_{k},y^{k-1}_{v}}>0, \\
			&\nm{v_{k}}\nm{y_{v}^{k-1}}, & &~~~~~~\text{~otherwise}.
		\end{split}
	\end{aligned}
	\right.
\end{equation}
As described in \cite{LLL2024}, to guarantee $H^{k}$ satisfies  condition (\ref{cond}), we can proceed as follows:
\begin{algorithm}  
	\caption{{\ttfamily{modified\_Cholesky\_factorization}}}\label{cho}
	\LinesNumbered  \small
	\KwData{$D^{k},~0<c_{1}\leq c_{2}$}
	{Update $H^{k}$ as (\ref{hk})}\\
	{Set $\hat{H}^{k}=D_{k}^{-1}H^{k}D_{k}^{-1}$}\\
	{Compute a triangular matrix $L\in\mathbb{R}^{2\times2}$:
		\begin{equation*}
			L_{11}=\left\{
			\begin{aligned}
				\begin{split}
					&\sqrt{\hat{H}^{k}_{11}}, & &\sqrt{\hat{H}^{k}_{11}}>c_{1}, \\
					&\sqrt{c_{2}}, & &\text{~otherwise}.
				\end{split}
			\end{aligned}
			\right.
		\end{equation*}
	\begin{equation*}
		L_{21}=\frac{\hat{H}^{k}_{21}}{L_{11}},
	\end{equation*}
and 
\begin{equation*}
	L_{22}=\left\{
	\begin{aligned}
		\begin{split}
			&\sqrt{\hat{H}^{k}_{22}-L_{21}^{2}}, & &\hat{H}^{k}_{22}-L_{21}^{2}>c_{1}, \\
			&\sqrt{c_{2}}, & &\text{~otherwise}.
		\end{split}
	\end{aligned}
	\right.
\end{equation*}
	}\\
{Compute $\hat{H}^{k}=LL^{T}$}	\\
{Set $H^{k}=D_{k}\hat{H}^{k}D_{k}$}
 
\end{algorithm}

The subspace minimization Barzilai-Borwein descent method for MOPs is described as follows.
\begin{algorithm}  
	\caption{{\ttfamily{subspace\_minimization\_Barzilai-Borwein\_descent\_method\_for\_MOPs}}}\label{smbb}
	\LinesNumbered  
	\KwData{$x^{0}\in\mathbb{R}^{n},~0<c_{1}\leq c_{2},~0<\sigma_{1}\leq\sigma_{2}$}
	{Choose $x^{-1}$ in a small neighborhood of $x^{0}$}\\
	\For{$k=0,...$}{Update $\alpha^{k}_{i}$ as (\ref{bbalpha_k}),\ $i\in[m]$\\
		Compute $v_{k}$ and $\lambda^{k}$ as the solution and dual solution of (\ref{dbb}), respectively\\
		\eIf{$v_{k}=0$}{ {\bf{return}} Pareto critical point $x^{k}$ }{\eIf{$k=0$}{Set $d_{k}=v_{k}$,
				$\bar{\lambda}^{k}={\lambda}^{k}$,
				$\bar{\alpha}^{k}={\alpha}^{k}$}{Update $H^{k}$ by Algorithm \ref{cho}\\
				Update $\bar{\alpha}^{k}_{i}$ as (\ref{alpha_k2}),~$i\in[m]$\\
				Compute $(\mu_{k},\nu_{k})^{T}$ and $\bar{\lambda}^{k}$ as the solution and dual solution of (\ref{d3}), respectively\\
			Set $d_{k}=\mu_{k}v_{k}+\nu_{k}s_{k-1}$}
		Compute a stepsize $t_{k}$ satisfies equations (\ref{wolfe1}) and (\ref{wolfe2})\\
		$x^{k+1}:= x^{k}+t_{k}d_{k}$\\
	}}  
\end{algorithm}

\section{Convergence Analysis}\label{sec4}
This section presents the convergence results for Algorithm \ref{smbb}. Notably, Algorithm \ref{smbb} terminates with a Pareto critical point in a finite number of iterations or generates an infinite sequence of noncritical points. In the sequel, we will assume that Algorithm \ref{smbb} produces an infinite sequence of noncritical points. 
\subsection{Global Convergence}
In this subsection, we analyze the global convergence of Algorithm \ref{smbb} without making any convexity assumptions.
\begin{theorem}\label{t3}
		Suppose that Assumption \ref{a2} holds. Let $\{x^{k}\}$ be the sequence generated by Algorithm \ref{smbb}. Then $\{x_{k}\}$ has at least one accumulation point, and every accumulation point $x^{*}\in\mathcal{L}_{F}(x^{0})$ is a Pareto critical point.
\end{theorem}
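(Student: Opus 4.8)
The plan is to follow the classical descent-method route: monotone decrease of $F$ to trap the iterates in the compact level set, a Zoutendijk-type summability condition extracted from the Wolfe line search, and a continuity argument at accumulation points, all carried out for the scaled multiobjective directional derivative $\mathcal{D}_{\bar{\alpha}^{k}}(x^{k},d_{k})$. First I would show $\{x^{k}\}\subset\mathcal{L}_{F}(x^{0})$. Since each $d_{k}$ is a descent direction we have $\mathcal{D}_{\bar{\alpha}^{k}}(x^{k},d_{k})<0$, and the sufficient-decrease condition (\ref{wolfe1}) together with $\bar{\alpha}^{k}_{i}>0$ and $t_{k}>0$ yields $F_{i}(x^{k+1})<F_{i}(x^{k})$ for every $i\in[m]$. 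Hence the whole sequence stays in $\mathcal{L}_{F}(x^{0})$, which is compact by Assumption \ref{a2}, so $\{x^{k}\}$ has at least one accumulation point and every such point lies in $\mathcal{L}_{F}(x^{0})$.

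Next I would establish a Zoutendijk-type inequality $\sum_{k}\mathcal{D}_{\bar{\alpha}^{k}}(x^{k},d_{k})^{2}/\nm{d_{k}}^{2}<\infty$. On the compact set $\mathcal{L}_{F}(x^{0})$ every $\nabla F_{i}$ is Lipschitz; combining this with the curvature condition (\ref{wolfe2}) produces a lower bound $t_{k}\geq\kappa\,\snm{\mathcal{D}_{\bar{\alpha}^{k}}(x^{k},d_{k})}/\nm{d_{k}}^{2}$ on the accepted stepsize. Substituting this bound into (\ref{wolfe1}) and summing over $k$, using that each $F_{i}$ is bounded below on the compact level set so that the telescoping decrease is finite, yields the desired summability. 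This is the multiobjective Zoutendijk condition of \cite{LP2018}, here carried out with the objective-wise weights $\bar{\alpha}^{k}$.

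Finally I would convert summability into $\nm{v_{k}}\to0$ and deduce criticality. The preceding proposition supplies the bounds (\ref{ine1})--(\ref{ine3}); because $\alpha^{k},\bar{\alpha}^{k}\in[\alpha_{\min},\alpha_{\max}]^{m}$, the ratios $\alpha^{k}_{i}/\bar{\alpha}^{k}_{i}$ are bounded above and below by positive constants, so (\ref{ine2}) and the upper bound in (\ref{ine3}) combine to give $\mathcal{D}_{\bar{\alpha}^{k}}(x^{k},d_{k})^{2}/\nm{d_{k}}^{2}\geq C\nm{v_{k}}^{2}$ for a fixed $C>0$. The Zoutendijk condition then forces $\nm{v_{k}}\to0$. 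Given an arbitrary accumulation point $x^{*}$, I would pass to a subsequence along which $x^{k}\to x^{*}$ and $\alpha^{k}\to\alpha^{*}\in[\alpha_{\min},\alpha_{\max}]^{m}$; since the unique solution $v(x,\alpha)$ of the strongly convex problem (\ref{dbb}) depends continuously on $(x,\alpha)$, we get $v(x^{*},\alpha^{*})=\lim_{k}v_{k}=0$. The solution of (\ref{dbb}) vanishes exactly when $0\in\mathrm{conv}\{\nabla F_{i}(x^{*})/\alpha^{*}_{i}\}$, equivalently when $0\in\mathrm{conv}\{\nabla F_{i}(x^{*})\}$, i.e. when $\mathrm{range}(JF(x^{*}))\cap-\mathbb{R}^{m}_{++}=\emptyset$, so $x^{*}$ is Pareto critical.

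I expect the Zoutendijk step to be the main obstacle: one must verify that the stepsize lower bound and the telescoping sum survive the replacement of a single shared directional derivative by the scaled quantities $\mathcal{D}_{\bar{\alpha}^{k}}$, and that the Lipschitz constant obtained from the compact level set can be used uniformly across all iterations. The concluding continuity argument also requires care, since $\alpha^{k}$ and $\bar{\alpha}^{k}$ need not converge; it must therefore be phrased along a common subsequence and rely only on their confinement to $[\alpha_{\min},\alpha_{\max}]$, together with the observation that scaling the gradients by strictly positive weights does not affect whether $0$ lies in their convex hull.
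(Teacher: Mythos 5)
Your overall architecture (monotone decrease into the compact level set, a summability condition extracted from the Wolfe conditions, then a limiting argument giving $v_{k}\to0$ and criticality) matches the paper's, and your concluding continuity argument is sound. However, there is a genuine gap in your Zoutendijk step: you assert that ``on the compact set $\mathcal{L}_{F}(x^{0})$ every $\nabla F_{i}$ is Lipschitz,'' and this Lipschitz constant is exactly what you need to derive the stepsize lower bound $t_{k}\geq\kappa\,\snm{\mathcal{D}_{\bar{\alpha}^{k}}(x^{k},d_{k})}/\nm{d_{k}}^{2}$. But Theorem \ref{t3} assumes only Assumption \ref{a2}; the objective $F$ is merely continuously differentiable, and a continuous gradient on a compact set is uniformly continuous but need not be Lipschitz (consider $f(x)=\snm{x}^{3/2}$ near the origin, whose derivative is continuous but not Lipschitz at $0$). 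Lipschitz continuity of the gradients is precisely the content of Assumption \ref{a3}, which the paper introduces only later, for the stepsize lower bound (\ref{step}) used in the linear-convergence analysis. So your argument, as written, proves the theorem only under Assumptions \ref{a2} and \ref{a3} together, which is a strictly weaker result than the one stated.

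The paper's proof is arranged to avoid exactly this. Instead of a stepsize lower bound, it sums (\ref{wolfe1}) combined with (\ref{ine1}) to obtain $\sum_{k}t_{k}\nm{d_{k}}^{2}<\infty$, hence $t_{k}d_{k}\to0$ along the subsequence converging to $x^{*}$; it then feeds this into the curvature condition (\ref{wolfe2}), written as $(\sigma_{2}-1)\mathcal{D}_{\bar{\alpha}^{k}}(x^{k},d_{k})\leq \mathcal{D}_{\bar{\alpha}^{k}}(x^{k}+t_{k}d_{k},d_{k})-\mathcal{D}_{\bar{\alpha}^{k}}(x^{k},d_{k})$, whose right-hand side tends to zero using only the continuity of $\nabla F_{i}$ (uniform on compact sets) together with the boundedness of $\nm{d_{k}}$, available from (\ref{ine3}) since $\nm{v_{k}}$ is bounded on the level set. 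This gives $\mathcal{D}_{\bar{\alpha}^{k}}(x^{k},d_{k})\to0$ along the subsequence, and then (\ref{ine2}) yields $v_{k}\to0$, whence criticality of $x^{*}$. To repair your proof without strengthening the hypotheses, you would have to replace the Lipschitz-based stepsize bound by this continuity argument --- that is, essentially switch to the paper's route.
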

\begin{proof}
	We use the relation (\ref{wolfe1}) to deduce that $\{F_{i}(x^{k})\}$ is monotone decreasing and that
	\begin{equation}\label{line}
		F_{i}(x^{k+1})-F_{i}(x^{k})\leq \alpha_{\min}\sigma_{1}t_{k}\mathcal{D}_{\bar{\alpha}^{k}}(x^{k},d_{k}).
	\end{equation}
	It follows that
	$\{x^{k}\}\subset\mathcal{L}_{F}(x^{0})$ and $\{x_{k}\}$ has at least one accumulation point $x^{*}$, namely, there exists an infinite index set $\mathcal{K}$ such that $\lim_{k\in\mathcal{K}}x^{k}=x^{*}$. From the compactness of $\mathcal{L}_{F}(x^{0})$ and continuity of $F$, we deduce that $\{F(x^{k})\}$ is bounded. This, together with the monotonicity of $\{F_{i}(x^{k})\}$, indicates that $\{F(x^{k})\}$ is a Cauchy sequence. Therefore, there exists a point $F^{*}$ such that
	$$\lim\limits_{k\rightarrow\infty}F(x^{k})=F^{*}=F(x^{*}).$$
	Summing the inequality (\ref{line}) from $k=0$ to infinity and substituting the preceding limit, we have 
	$$-\sum\limits_{k=0}^{\infty}\alpha_{\min}\sigma_{1}t_{k}\mathcal{D}_{\bar{\alpha}^{k}}(x^{k},d_{k})\leq F_{i}(x^{0})-F_{i}^{*}<\infty.$$
	Plugging relation (\ref{ine1}) into the latter inequality gives
	$$\sum\limits_{k=0}^{\infty}t_{k}\nm{d_{k}}^{2}<\infty.$$
	It follows that
	\begin{equation}\label{lim}
		\lim\limits_{k\in\mathcal{K}}t_{k}d_{k}=0.
	\end{equation}
We use relation (\ref{wolfe2}) to get
\begin{align*}
	(\sigma_{2}-1)\mathcal{D}_{\bar{\alpha}^{k}}(x^{k},d_{k})\leq \mathcal{D}_{\bar{\alpha}^{k}}(x^{k}+t_{k}d_{k},d_{k}) - \mathcal{D}_{\bar{\alpha}^{k}}(x^{k},d_{k}).
\end{align*}
Taking the limit on both sides, the latter inequality, together with (\ref{lim}) and the continuity of $\nabla F_{i}$, implies
$$\lim\limits_{k\in\mathcal{K}}\mathcal{D}_{\bar{\alpha}^{k}}(x^{k},d_{k})=0.$$
Plugging the above limit into (\ref{ine2}) gives
$$\lim\limits_{k\in\mathcal{K}}v_{k}=0.$$
It follows by the \cite[Lemma 5(d)]{CTY2023a} that $x^{*}$ is a Pareto critical point.
\end{proof}

\subsection{Linear convergence}
This subsection is devoted to the linear convergence of Algorithm \ref{smbb}. Before presenting the convergence result, we introduce the following error bound condition.
\begin{definition}\label{def4}
	The vector-valued function $F$ satisfies a global error bound, if there exists a constant $\kappa$ such that
	$$u_{0}(x)\leq\kappa\nm{v(x)}^{2},~\forall x\in\mathbb{R}^{n},$$
	where $$u_{0}(x):=\sup\limits_{y\in\mathbb{R}^{n}}\min\limits_{i\in[m]}\{F_{i}(x)-F_{i}(y)\}$$
	is a merit function for (\ref{MOP}) (see \cite[Theorem 3.1]{TFY2023n}).
\end{definition}
\begin{remark}
Since $\nm{v_{k}}$ and $\nm{d^{k}_{SD}}$ are equivalent, the definition \ref{def4} is equivalent to the multiobjective PL-inequality \cite{TFY2023n} for unconstrained multiobjective optimization problems. As a result, strong convexity of $F$ is a sufficient condition for the definition \ref{def4}.
\end{remark}

To establish the linear convergence result of SMBBMO, we must first derive a lower bound for the stepsize $t_{k}$.
\begin{assumption}\label{a3}
	For each $i\in[m]$, the gradient $\nabla F_{i}$ is Lipschitz continuous with constant $L_{i}$.
\end{assumption}
\begin{lemma}
	Suppose that Assumption \ref{a3} holds. If the stepsize $t_{k}$ is obtained by Wolfe line search, then
	\begin{equation}\label{step}
		t_{k}\geq t_{\min}:=\frac{(1-\sigma_{2})c_{1}\alpha_{\min}}{2L_{\max}},
	\end{equation}
	where $L_{\max}:=\max_{i\in[m]}\{L_{i}\}$.
\end{lemma}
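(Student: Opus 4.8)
The plan is to convert the curvature condition (\ref{wolfe2}) into an upper bound on $t_{k}\nm{d_{k}}^{2}$ through Lipschitz continuity, and then clear the $\nm{d_{k}}^{2}$ factor using the sufficient descent estimate (\ref{ine1}). First I would rearrange (\ref{wolfe2}) into
$$(\sigma_{2}-1)\mathcal{D}_{\bar{\alpha}^{k}}(x^{k},d_{k})\leq \mathcal{D}_{\bar{\alpha}^{k}}(x^{k}+t_{k}d_{k},d_{k})-\mathcal{D}_{\bar{\alpha}^{k}}(x^{k},d_{k}).$$
Since $d_{k}$ is a descent direction we have $\mathcal{D}_{\bar{\alpha}^{k}}(x^{k},d_{k})<0$, and $\sigma_{2}<1$, so the left-hand side equals the positive quantity $(1-\sigma_{2})\bigl(-\mathcal{D}_{\bar{\alpha}^{k}}(x^{k},d_{k})\bigr)$.

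The main technical step is to bound the right-hand increment from above. Because $\mathcal{D}_{\bar{\alpha}^{k}}(\cdot,d_{k})$ is a maximum over $i\in[m]$, I would let $i^{*}$ be an index attaining the maximum at the trial point $x^{k}+t_{k}d_{k}$, and note that the maximum at $x^{k}$ dominates its own $i^{*}$-th term. This collapses the difference of two maxima into the single-index quantity $\dual{(\nabla F_{i^{*}}(x^{k}+t_{k}d_{k})-\nabla F_{i^{*}}(x^{k}))/\bar{\alpha}^{k}_{i^{*}}, d_{k}}$. Applying the Cauchy--Schwarz inequality, Assumption \ref{a3} (Lipschitz continuity with constant $L_{i^{*}}\leq L_{\max}$), and the uniform lower bound $\bar{\alpha}^{k}_{i^{*}}\geq\alpha_{\min}$, this term is at most $(L_{\max}/\alpha_{\min})\,t_{k}\nm{d_{k}}^{2}$. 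Handling this difference of two maxima over the objective index is the only place that requires care, since one cannot differentiate the pointwise maximum directly; evaluating both maxima at the common index $i^{*}$ is the device that makes the estimate go through.

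Combining the two displays yields
$$(1-\sigma_{2})\bigl(-\mathcal{D}_{\bar{\alpha}^{k}}(x^{k},d_{k})\bigr)\leq \frac{L_{\max}}{\alpha_{\min}}\,t_{k}\nm{d_{k}}^{2},$$
so that $t_{k}\geq (1-\sigma_{2})\alpha_{\min}\bigl(-\mathcal{D}_{\bar{\alpha}^{k}}(x^{k},d_{k})\bigr)/(L_{\max}\nm{d_{k}}^{2})$. Finally I would invoke the sufficient descent bound (\ref{ine1}), rewritten as $-\mathcal{D}_{\bar{\alpha}^{k}}(x^{k},d_{k})\geq \frac{c_{1}}{2}\nm{d_{k}}^{2}$, which cancels the $\nm{d_{k}}^{2}$ factor and delivers exactly
$$t_{k}\geq \frac{(1-\sigma_{2})c_{1}\alpha_{\min}}{2L_{\max}}=t_{\min}.$$
The argument relies only on Lipschitz continuity together with the already-established lower bound $c_{1}$ on $\lambda_{\min}(D_{k}^{-1}H^{k}D_{k}^{-1})$ that underlies (\ref{ine1}); no convexity assumption on $F$ is needed.
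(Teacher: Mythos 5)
Your proposal is correct and follows essentially the same route as the paper's proof: start from the curvature condition (\ref{wolfe2}), bound the increment $\mathcal{D}_{\bar{\alpha}^{k}}(x^{k}+t_{k}d_{k},d_{k})-\mathcal{D}_{\bar{\alpha}^{k}}(x^{k},d_{k})$ by $(L_{\max}/\alpha_{\min})\,t_{k}\nm{d_{k}}^{2}$ via Lipschitz continuity and $\bar{\alpha}^{k}_{i}\geq\alpha_{\min}$, then substitute the sufficient descent inequality (\ref{ine1}) to cancel $\nm{d_{k}}^{2}$. The only cosmetic difference is that you make the difference-of-maxima step explicit with the maximizing index $i^{*}$, whereas the paper writes the equivalent bound $\max_{i\in[m]}\dual{(\nabla F_{i}(x^{k}+t_{k}d_{k})-\nabla F_{i}(x^{k}))/\bar{\alpha}^{k}_{i},d_{k}}$ directly.
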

\begin{proof}
	Using relation (\ref{wolfe2}) and Assumption \ref{a3}, we have
	\begin{align*}
		(\sigma_{2}-1)\mathcal{D}_{\bar{\alpha}^{k}}(x^{k},d_{k})&\leq \mathcal{D}_{\bar{\alpha}^{k}}(x^{k}+t_{k}d_{k},d_{k}) - \mathcal{D}_{\bar{\alpha}^{k}}(x^{k},d_{k})\\
		&\leq\max\limits_{i\in[m]}\dual{\frac{\nabla F_{i}(x^{k}+t_{k}d^{k})-\nabla F_{i}(x^{k})}{\bar{\alpha}^{k}_{i}},d^{k}}\\
		&\leq\max\limits_{i\in[m]}\frac{L_{i}}{\bar{\alpha}^{k}_{i}}t_{k}\nm{d^{k}}^{2}\\
		&\leq\frac{L_{\max}}{\alpha_{\min}}t_{k}\nm{d^{k}}^{2}.
	\end{align*}
	By substituting (\ref{ine1}) into the above inequality, the desired result follows.
\end{proof}

Next, we show the Q-linear convergence of $\{u_{0}(x^{k})\}$.
\begin{theorem}
	Suppose that $F$ satisfies definition \ref{def4} and Assumption \ref{a3} holds. Let $\{x^{k}\}$ be the sequence generated by Algorithm \ref{smbb}. Then
	$$u_{0}(x^{k+1})\leq\left(1-r\right)u_{0}(x^{k}),$$
	where $r:=\sigma_{1}t_{\min}\alpha_{\min}^{3}/(c_{2}\kappa\alpha_{\max}^{2})$.
\end{theorem}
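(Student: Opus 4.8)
The plan is to establish the $Q$-linear decrease by chaining together three facts: the Wolfe sufficient-decrease inequality (\ref{wolfe1}), the lower bound on the stepsize (\ref{step}), and the error-bound condition from Definition \ref{def4}, while controlling the various scaling factors $\bar{\alpha}^{k}_{i}$ and $\alpha^{k}_{i}$ through the uniform bounds $\alpha_{\min}\leq\bar{\alpha}^{k}_{i},\alpha^{k}_{i}\leq\alpha_{\max}$. First I would take the merit function $u_{0}$ and relate its decrease across one iteration to the decrease in a single objective. Since $u_{0}(x):=\sup_{y}\min_{i}\{F_{i}(x)-F_{i}(y)\}$, I expect that $u_{0}(x^{k+1})-u_{0}(x^{k})\leq\max_{i\in[m]}(F_{i}(x^{k+1})-F_{i}(x^{k}))$; this is the key monotonicity/translation property of the merit function that lets me import the per-coordinate descent estimate.

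Second, I would apply (\ref{wolfe1}) to bound $F_{i}(x^{k+1})-F_{i}(x^{k})\leq\sigma_{1}t_{k}\bar{\alpha}^{k}_{i}\mathcal{D}_{\bar{\alpha}^{k}}(x^{k},d_{k})$ for every $i$. Because $\mathcal{D}_{\bar{\alpha}^{k}}(x^{k},d_{k})<0$ (a descent direction) and $\bar{\alpha}^{k}_{i}\geq\alpha_{\min}$, the worst case over $i$ gives an upper bound of the form $\sigma_{1}t_{k}\alpha_{\min}\mathcal{D}_{\bar{\alpha}^{k}}(x^{k},d_{k})$. Then I would invoke (\ref{step}) to replace $t_{k}$ by $t_{\min}$ and invoke (\ref{ine2}) to replace $\mathcal{D}_{\bar{\alpha}^{k}}(x^{k},d_{k})$ by $-\min_{i}(\alpha^{k}_{i}/\bar{\alpha}^{k}_{i})^{2}\|v_{k}\|^{2}/c_{2}$. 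Using $\alpha^{k}_{i}\geq\alpha_{\min}$ and $\bar{\alpha}^{k}_{i}\leq\alpha_{\max}$ bounds the ratio $\min_{i}(\alpha^{k}_{i}/\bar{\alpha}^{k}_{i})^{2}$ from below by $(\alpha_{\min}/\alpha_{\max})^{2}$, yielding $u_{0}(x^{k+1})-u_{0}(x^{k})\leq-\sigma_{1}t_{\min}\alpha_{\min}(\alpha_{\min}/\alpha_{\max})^{2}\|v_{k}\|^{2}/c_{2}$.

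Third, I would close the loop with the error bound $u_{0}(x^{k})\leq\kappa\|v(x^{k})\|^{2}$, which rearranges to $\|v_{k}\|^{2}\geq u_{0}(x^{k})/\kappa$. Substituting this in and collecting constants produces $u_{0}(x^{k+1})-u_{0}(x^{k})\leq-r\,u_{0}(x^{k})$ with $r=\sigma_{1}t_{\min}\alpha_{\min}^{3}/(c_{2}\kappa\alpha_{\max}^{2})$, exactly matching the claimed rate once the factor $\alpha_{\min}\cdot(\alpha_{\min}/\alpha_{\max})^{2}=\alpha_{\min}^{3}/\alpha_{\max}^{2}$ is expanded. Rearranging gives the stated $u_{0}(x^{k+1})\leq(1-r)u_{0}(x^{k})$.

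The main obstacle I anticipate is rigorously justifying the first step, namely the inequality $u_{0}(x^{k+1})-u_{0}(x^{k})\leq\max_{i}(F_{i}(x^{k+1})-F_{i}(x^{k}))$ for the supremum-type merit function; one must argue carefully that shifting the argument from $x^{k}$ to $x^{k+1}$ inside the $\sup_{y}\min_{i}$ structure cannot increase the value by more than the largest coordinatewise increment, and verify that $u_{0}$ is finite and well-behaved (which relies on Assumption \ref{a2}). A secondary technical point is confirming that $r<1$ so that $1-r$ is a genuine contraction factor; this should follow from the definitions of the constants, but I would state it explicitly. The remaining algebra—tracking the $\alpha_{\min},\alpha_{\max},c_{1},c_{2}$ powers—is routine once these structural facts are in place.
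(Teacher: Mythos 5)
Your proposal is correct and follows essentially the same route as the paper's proof: chain (\ref{wolfe1}) (i.e., (\ref{line})), the stepsize bound (\ref{step}), inequality (\ref{ine2}), the truncation bounds $\alpha_{\min}\leq\alpha^{k}_{i},\bar{\alpha}^{k}_{i}\leq\alpha_{\max}$, and the error bound, then transfer the uniform-in-$i$ decrease to $u_{0}$. The ``translation'' inequality you flag as the main obstacle is exactly what the paper does inline---subtract $F_{i}(x)$ from both sides of $F_{i}(x^{k+1})-F_{i}(x^{k})\leq-r\,u_{0}(x^{k})$, take $\min_{i\in[m]}$ and then $\sup_{x\in\mathbb{R}^{n}}$---so it is a two-line verification (finiteness of $u_{0}$ being immediate from the error bound itself) rather than a genuine gap.
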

\begin{proof}
	Using (\ref{line}) and (\ref{ine2}), we have
	\begin{align*}
			F_{i}(x^{k+1})-F_{i}(x^{k})&\leq \alpha_{\min}\sigma_{1}t_{k}\mathcal{D}_{\bar{\alpha}^{k}}(x^{k},d_{k})\\
			&\leq-\alpha_{\min}\sigma_{1}t_{\min}\min\limits_{i\in[m]}(\frac{{\alpha}^{k}_{i}}{\bar{\alpha}^{k}_{i}})^{2}\frac{\nm{v_{k}}^{2}}{c_{2}}\\
			&\leq-\sigma_{1}t_{\min}\alpha_{\min}^{3}/(c_{2}\alpha_{\max}^{2})\nm{v_{k}}^{2}\\
			&\leq-\sigma_{1}t_{\min}\alpha_{\min}^{3}/(c_{2}\kappa\alpha_{\max}^{2})u_{0}(x^{k}),
	\end{align*}
where the last inequality is due to the error bound. Denoting  $r:=\sigma_{1}t_{\min}\alpha_{\min}^{3}/(c_{2}\kappa\alpha_{\max}^{2})$, rearranging and taking the minimum and supremum with respect to $i\in[m]$ and $x\in\mathbb{R}^{n}$ on both
sides, respectively, we obtain
\begin{align*}
\max\limits_{x\in\mathbb{R}^{n}}\min\limits_{i\in[m]}\{F_{i}(x^{k+1})-F_{i}(x)\}\leq\max\limits_{x\in\mathbb{R}^{n}}\min\limits_{i\in[m]}\{F_{i}(x^{k})-F_{i}(x)\}-ru_{0}(x^{k}).
\end{align*}
Hence, the desired result follows.
\end{proof}
\section{Numerical results}\label{sec5}
In this section, we present numerical results to demonstrate the performance of SMBBMO for various problems. We also compare SMBBMO with Barzilai-Borwein descent method for MOPs (BBDMO) \cite{CTY2023a} and Barzilai-Borwein quasi-Newton method for MOPs (BBQNMO) \cite{CTY2024p} to show its efficiency. All numerical experiments were implemented in Python 3.7 and executed on a personal computer with an Intel Core i7-11390H, 3.40 GHz processor, and 16 GB of RAM. For BBDMO, BBQNMO and SMBBMO, we set $\alpha_{\min}=10^{-3}$ and $\alpha_{\max}=10^{3}$ to truncate the Barzilai-Borwein's parameter. We use the Wolfe line search as in algorithm 3 in \cite{LM2023}, and set $\sigma_{1}=10^{-4},~\sigma_{2}=0.1$ in Wolfe line search. To ensure that the algorithms terminate after a finite number of iterations, for all tested algorithms we use the stopping criterion:
 $$\theta(x)\geq-5 \times\texttt{eps}^{1/2},$$ 
 where $\theta(x)=-{1}/{2}\nm{v(x)}^{2}$ for BBDMO and SMBBMO, and $\theta(x)=-{1}/{2}\nm{d(x)}_{B(x)}^{2}$ for BBQNMO, respectively, and $\texttt{eps}=2^{-52}\approx2.22\times10^{-16}$  is the
 machine precision.
 We also set the maximum number of iterations to 500. For each problem, we use the same initial points for different tested algorithms. The initial points are randomly selected within the specified lower and upper bounds. Dual subproblems of different algorithms are efficiently solved by Frank-Wolfe method. The recorded averages from the 200 runs include the number of iterations, the number of function evaluations, and the CPU time.

\subsection{Ordinary test problems}
The tested algorithms are executed on several test problems, and the problem illustration is given in Table \ref{tab1}. The dimensions of variables and objective functions are presented in the second and third columns, respectively. $x_{L}$ and $x_{U}$ represent lower bounds and upper bounds of variables, respectively.
\begin{table}[H]
	\begin{center}
		\caption{Description of all test problems used in numerical experiments}
		\label{tab1}\vspace{-5mm}
	\end{center}
	\centering
	\resizebox{.8\columnwidth}{!}{
		\begin{tabular}{lllllllllll}
			\hline
			Problem    &  & $n$     &           & $m$     &  & $x_{L}$               &  & $x_{U}$             &  & Reference \\ \hline
			DD1        &  & 5     & \textbf{} & 2     &  & (-20,...,-20)   &  & (20,...,20)   &  & \cite{DD1998}         \\
			Deb        &  & 2     & \textbf{} & 2     &  & (0.1,0.1)       &  & (1,1)         &  & \cite{D1999}          \\
			Far1       &  & 2     & \textbf{} & 2     &  & (-1,-1)         &  & (1,1)         &  & \cite{BK1}         \\
			FDS        &  & 5     & \textbf{} & 3     &  & (-2,...,-2)     &  & (2,...,2)     &  & \cite{FD2009}         \\
			FF1        &  & 2     & \textbf{} & 2     &  & (-1,-1)         &  & (1,1)         &  & \cite{BK1}         \\
			Hil1       &  & 2     & \textbf{} & 2     &  & (0,0)           &  & (1,1)         &  & \cite{Hil1}         \\
			Imbalance1 &  & 2     & \textbf{} & 2     &  & (-2,-2)         &  & (2,2)         &  & \cite{CTY2023a}         \\
			Imbalance2 &  & 2     & \textbf{} & 2     &  & (-2,-2)         &  & (2,2)         &  & \cite{CTY2023a}         \\
			LE1        &  & 2     & \textbf{} & 2     &  & (-5,-5)         &  & (10,10)       &  & \cite{BK1}        \\
			PNR        &  & 2     & \textbf{} & 2     &  & (-2,-2)         &  & (2,2)         &  & \cite{PN2006}        \\
			VU1        &  & 2     & \textbf{} & 2     &  & (-3,-3)         &  & (3,3)         &  & \cite{BK1}        \\
			WIT1       &  & 2     & \textbf{} & 2     &  & (-2,-2)         &  & (2,2)         &  & \cite{W2012}       \\
			WIT2       &  & 2     & \textbf{} & 2     &  & (-2,-2)         &  & (2,2)         &  & \cite{W2012}        \\
			WIT3       &  & 2     & \textbf{} & 2     &  & (-2,-2)         &  & (2,2)         &  & \cite{W2012}        \\
			WIT4       &  & 2     & \textbf{} & 2     &  & (-2,-2)         &  & (2,2)         &  & \cite{W2012}        \\
			WIT5       &  & 2     & \textbf{} & 2     &  & (-2,-2)         &  & (2,2)         &  & \cite{W2012}        \\
			WIT6       &  & 2 & \textbf{} & 2 &  & (-2,-2)         &  & (2,2)         &  & \cite{W2012}        \\ \hline
		\end{tabular}
	}
\end{table}

\begin{table}[H]
	\begin{center}
		\caption{Number of average iterations (iter), number of average function evaluations (feval), and average CPU time (time($ms$)) of BBDMO, BBQNMO, and SMBBMO implemented on different test problems}\label{tab3}\vspace{-5mm}
	\end{center}
	\centering
	\resizebox{.8\columnwidth}{!}{
		\begin{tabular}{lrrrrrrrrrrrrrrr}
			\hline
			Problem &
			\multicolumn{3}{l}{BBDMO} &
			&
			\multicolumn{3}{l}{BBQNMO} &
			\multicolumn{1}{l}{} &
			\multicolumn{3}{l}{SMBBMO}  \\ \cline{2-4} \cline{6-8} \cline{10-12} 
			&
			iter &
			feval &
			time &
			\textbf{} &
			iter &
			feval &
			time &
			&
			iter &
			feval &
			time  \\ \hline
			DD1        & 5.77 & \textbf{5.91} & \textbf{1.36} &  & 7.82 & 16.09 & 2.87 &  & \textbf{5.38} & {8.08} & {1.60} \\
			Deb        & 3.53 & {5.59} & \textbf{0.96} &  & \textbf{3.17} & \textbf{4.51} & 1.40 &  & 3.28 & 5.81 & {1.04} \\
			Far1       & 32.07 & 32.56 & 7.18 &  & \textbf{6.94} & \textbf{16.11} & \textbf{2.74} &  & 15.24 & 35.96 & 7.89 \\
			FDS        & 4.12 & 4.35 & \textbf{2.60} &  & 4.54 & 5.77 & 4.90 &  & \textbf{3.83} & \textbf{4.23} & {4.87}  \\
			FF1        & 4.08 & {5.30} & \textbf{0.63} &  & \textbf{3.37} & \textbf{5.12} & 0.90 &  & 3.50 & 5.83 & {1.13} \\
			Hil1       & 9.19 & 9.96 & 1.46 &  & \textbf{3.85} & \textbf{7.26} & \textbf{1.13 }&  & 6.34 & 10.91 & {2.41}\\
			Imbalance1 & {2.55} & \textbf{3.48} & \textbf{0.40} &  & 2.46 & 7.33 & 0.62 &  & \textbf{2.00} & {4.86} & {0.62} \\
			Imbalance2 & \textbf{1.00} & \textbf{1.00} & 0.27 &  & \textbf{1.00} & \textbf{1.00} & 0.29 &  & \textbf{1.00} & \textbf{1.00} & \textbf{0.21} \\
			LE1        & {3.61} & \textbf{5.77} & \textbf{0.58} &  & 3.78 & 5.93 & 0.90 &  & \textbf{3.57} & 7.85 & {1.11}\\
			PNR        & {3.30} & \textbf{3.58} & \textbf{0.88} &  & 3.38 & 4.40 & 0.73 &  & \textbf{3.17} & {4.28} & {0.89}  \\
			VU1        & 13.68 & 13.73 & 1.86 &  & \textbf{7.73} & \textbf{12.41} & \textbf{1.70} &  & 11.49 & 16.47 & {3.32}\\
			WIT1       & {2.95} & 3.04 & \textbf{0.42} &  & 2.77 & 3.23 & 0.59 &  & \textbf{2.54} & \textbf{2.91} & {0.70} \\
			WIT2       & {3.27} & 3.37 & \textbf{0.48} &  & 3.09 & 3.23 & 0.68 &  & \textbf{2.81} & \textbf{2.99} & {0.76}  \\
			WIT3       & {4.17} & 4.26 & \textbf{0.59} &  & 3.87 & 3.97 & 0.80 &  & \textbf{3.52} & \textbf{3.77} & {1.02} \\
			WIT4       & {4.33} & {4.38} & \textbf{0.58} &  & 4.08 & 4.15 & 0.84 &  & \textbf{3.59} & \textbf{3.85} & {1.00}  \\
			WIT5       & {3.43} & {3.45} & \textbf{0.50} &  & 3.36 & 3.40 & 0.72 &  & \textbf{2.94} & \textbf{3.04} & {0.83}  \\
			WIT6       & \textbf{1.00} & \textbf{1.00} & \textbf{0.22} &  & \textbf{1.00} & \textbf{1.00} & {0.24} &  & \textbf{1.00} & \textbf{1.00} & {0.23} \\ \hline
		\end{tabular}
	}
\end{table}
\par For each test problem, Table \ref{tab3} presents the average number of iterations (iter), average function evaluations (feval), and average CPU time (time($ms$)) for the different algorithms. It is observed that BBQNMO and SMBBMO surpass BBDMO in terms of average iterations, suggesting their superior ability to capture the local geometry of the tested problems. Notably, SMBBMO demonstrates superior performance over BBQNMO, particularly when $n=2$; thus, SMBBMO effectively captures the local geometry of the problems across the entire space. However, compared to BBDMO and BBQNMO, SMBBMO shows a relatively poorer performance in CPU time. This can be attributed to the well-conditioning of the test problems and the necessity to solve two subproblems in SMBBMO.

\begin{figure}[H]
	\centering
	\subfigure[DD1]
	{
		\begin{minipage}[H]{.22\linewidth}
			\centering
			\includegraphics[scale=0.22]{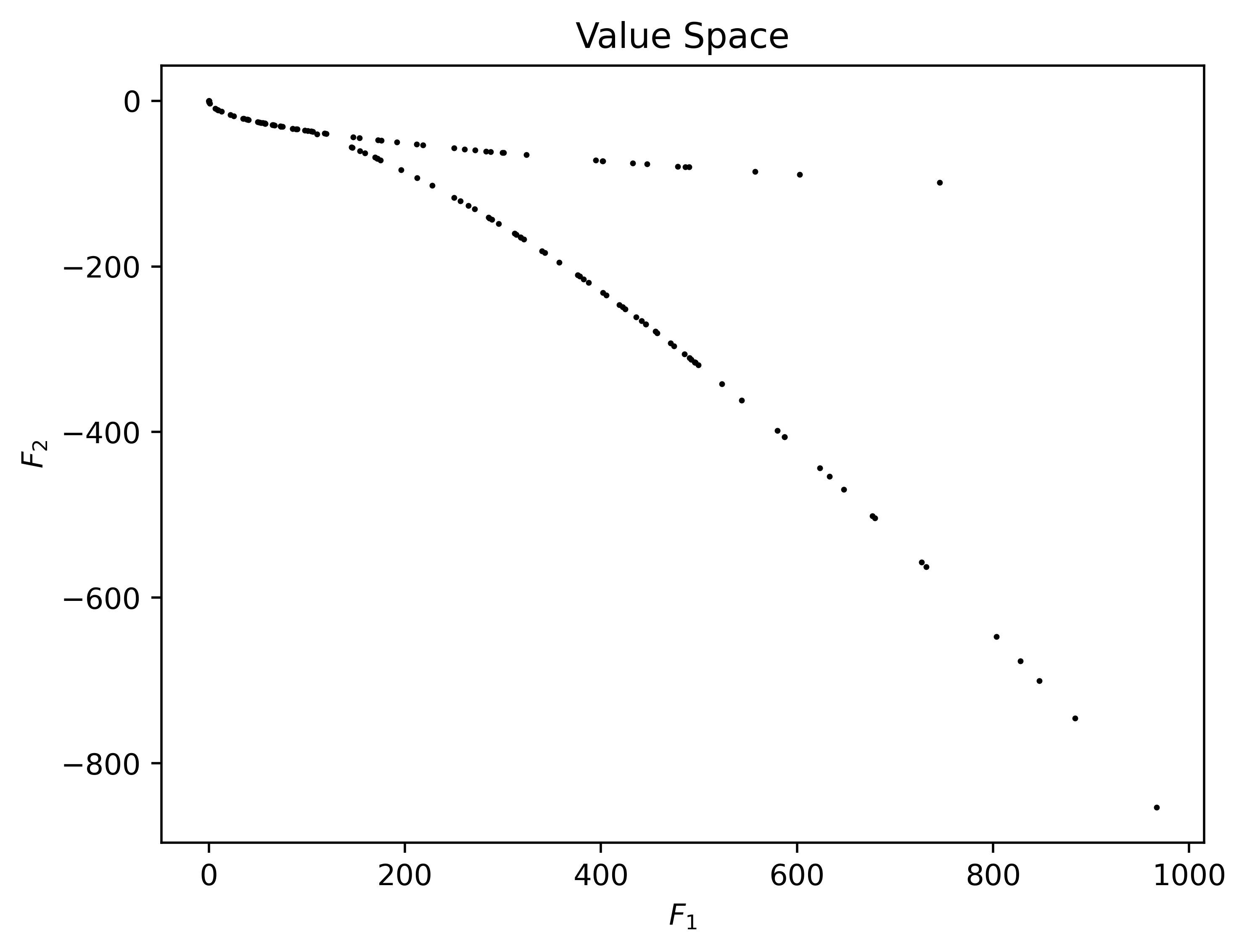} \\
			\includegraphics[scale=0.22]{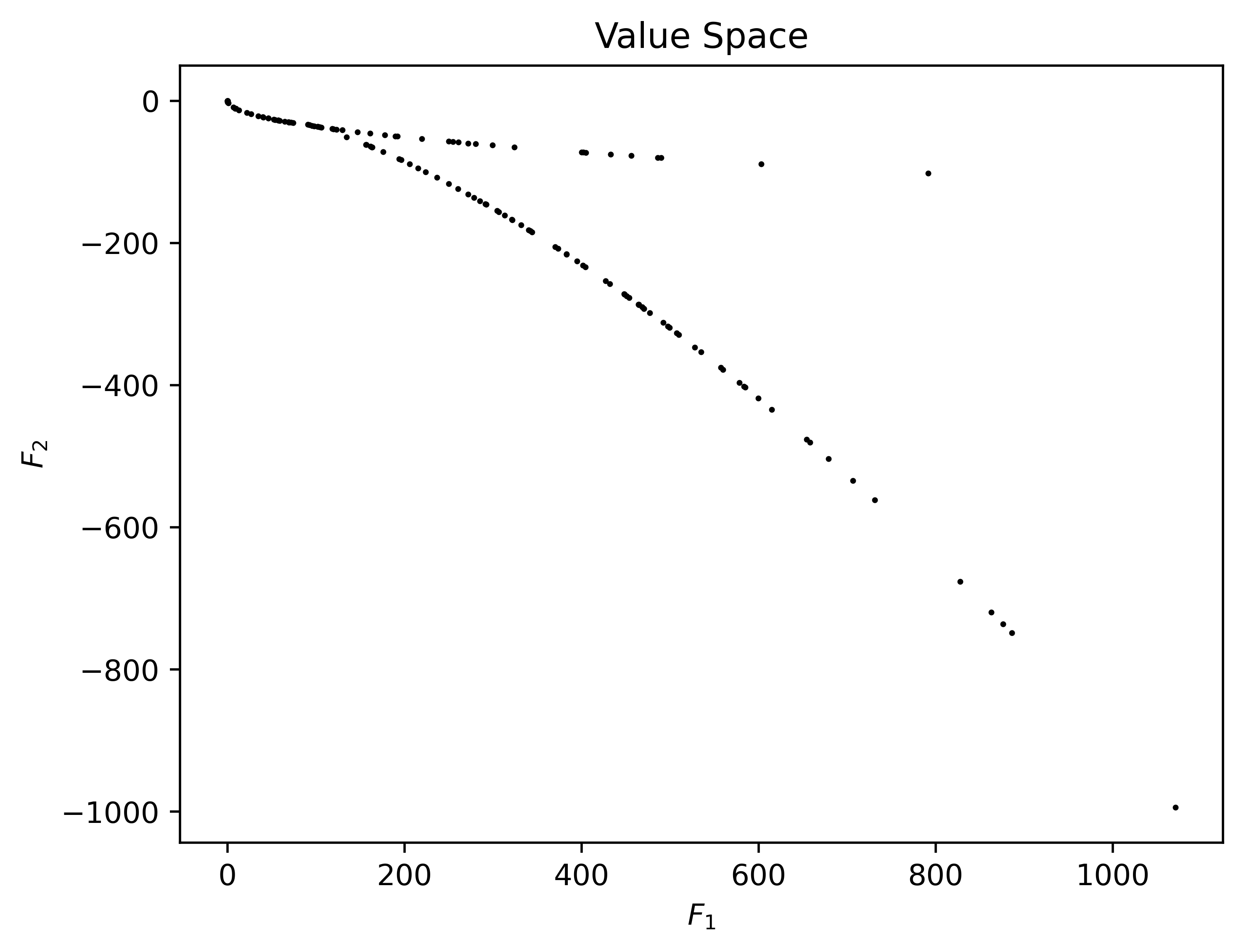} \\
			\includegraphics[scale=0.22]{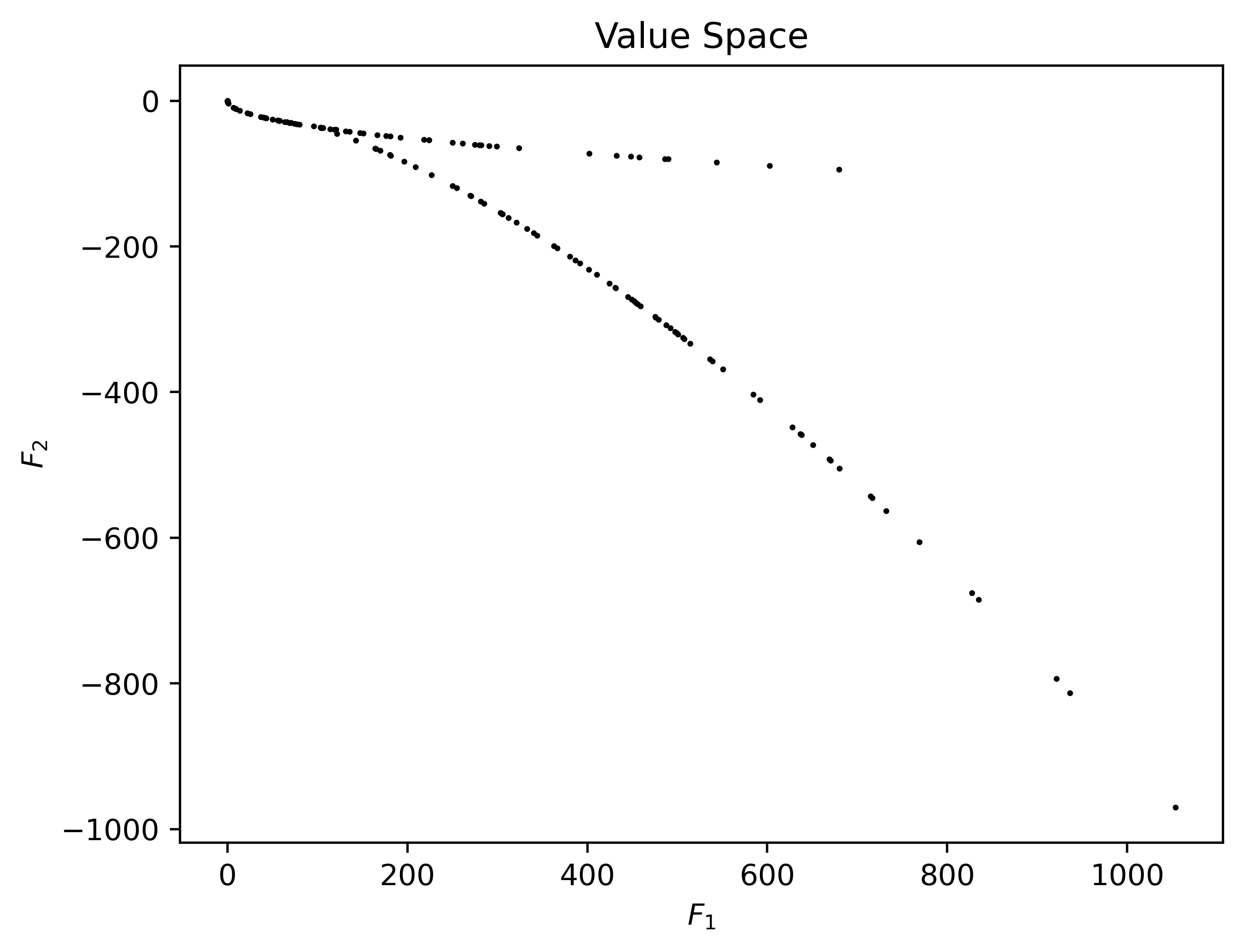}
		\end{minipage}
	}
	\subfigure[Hil1]
	{
		\begin{minipage}[H]{.22\linewidth}
			\centering
			\includegraphics[scale=0.22]{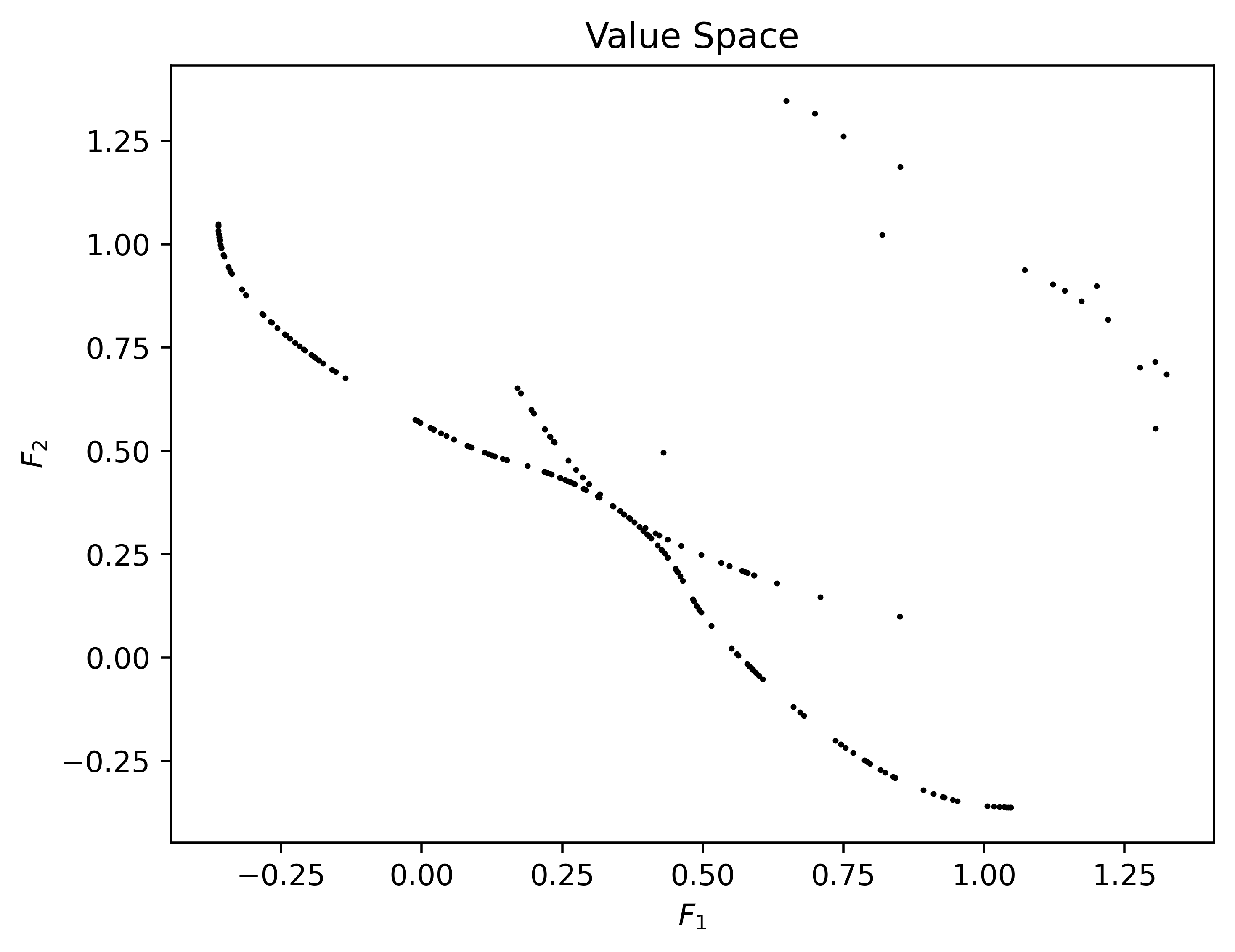} \\
			\includegraphics[scale=0.22]{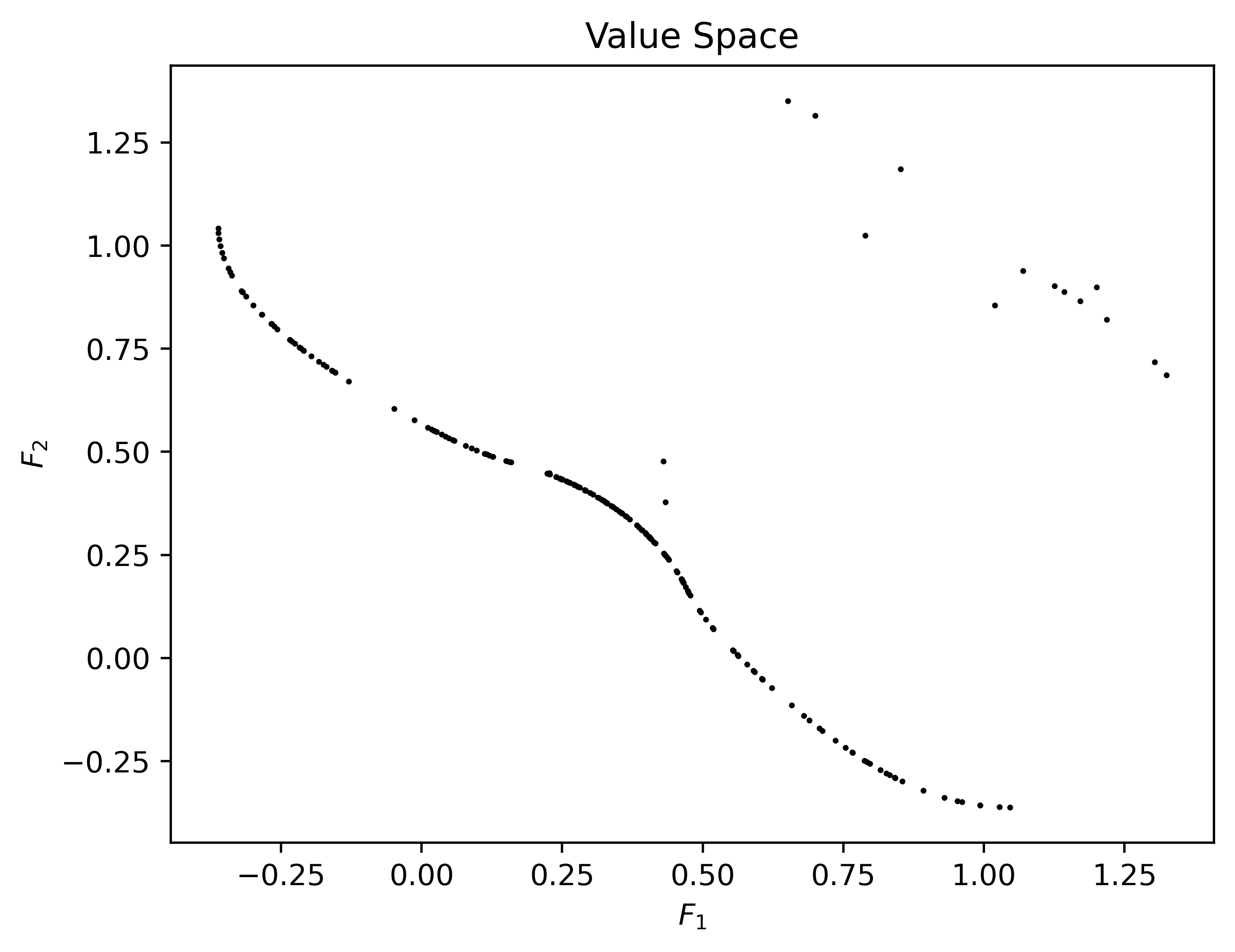} \\
			\includegraphics[scale=0.22]{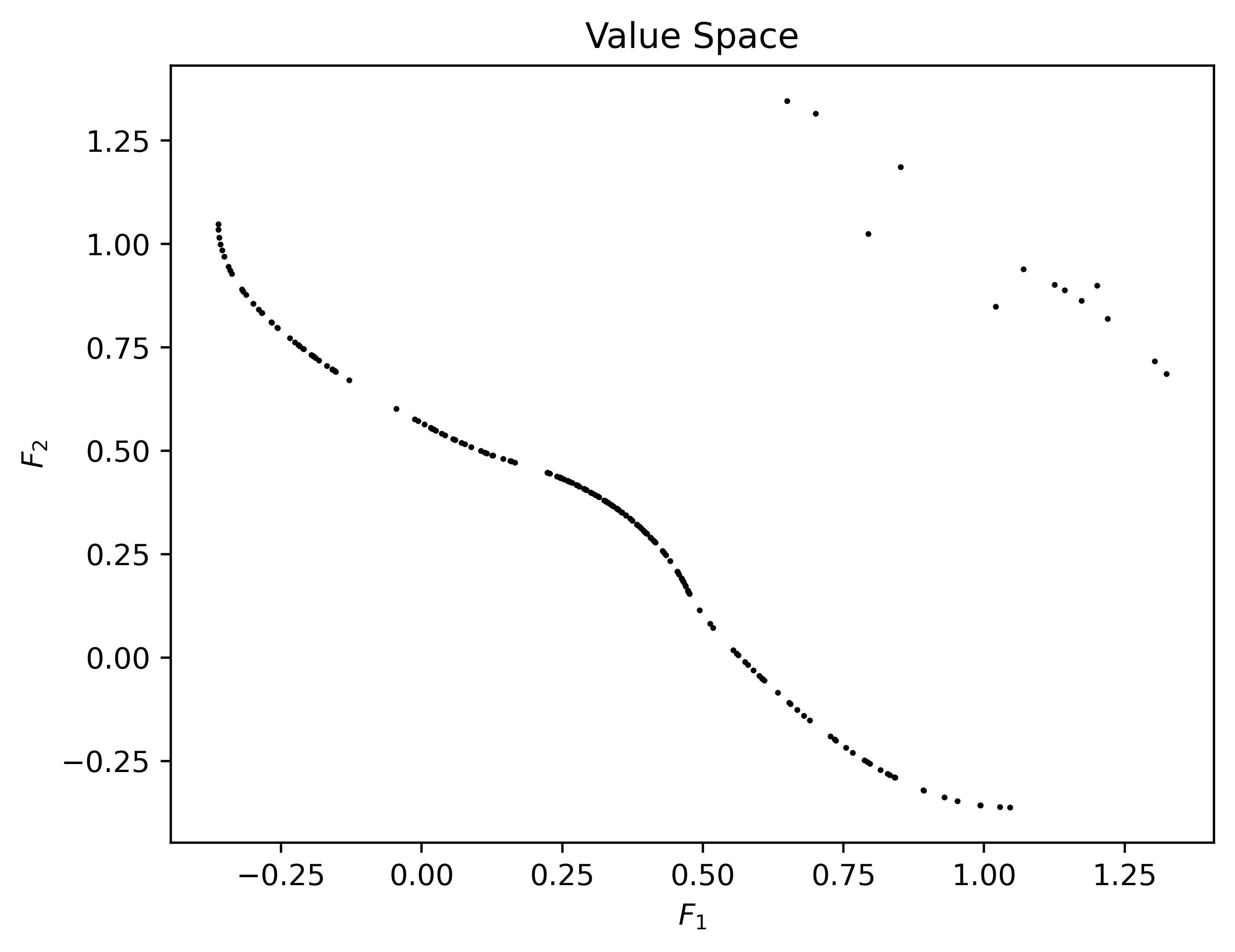}
		\end{minipage}
	}
    \subfigure[PNR]
    {
    	\begin{minipage}[H]{.22\linewidth}
    		\centering
    		\includegraphics[scale=0.22]{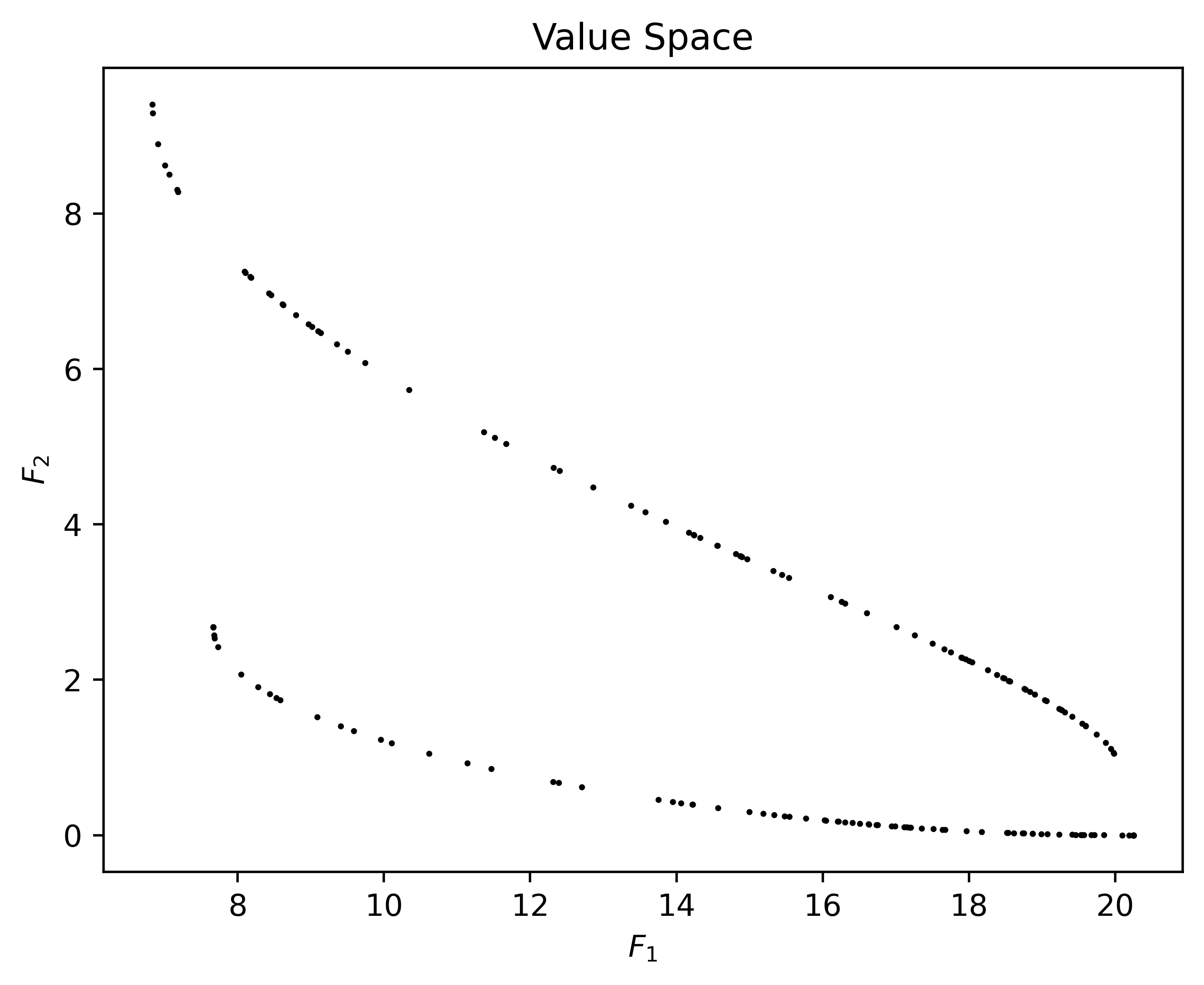} \\
    		\includegraphics[scale=0.22]{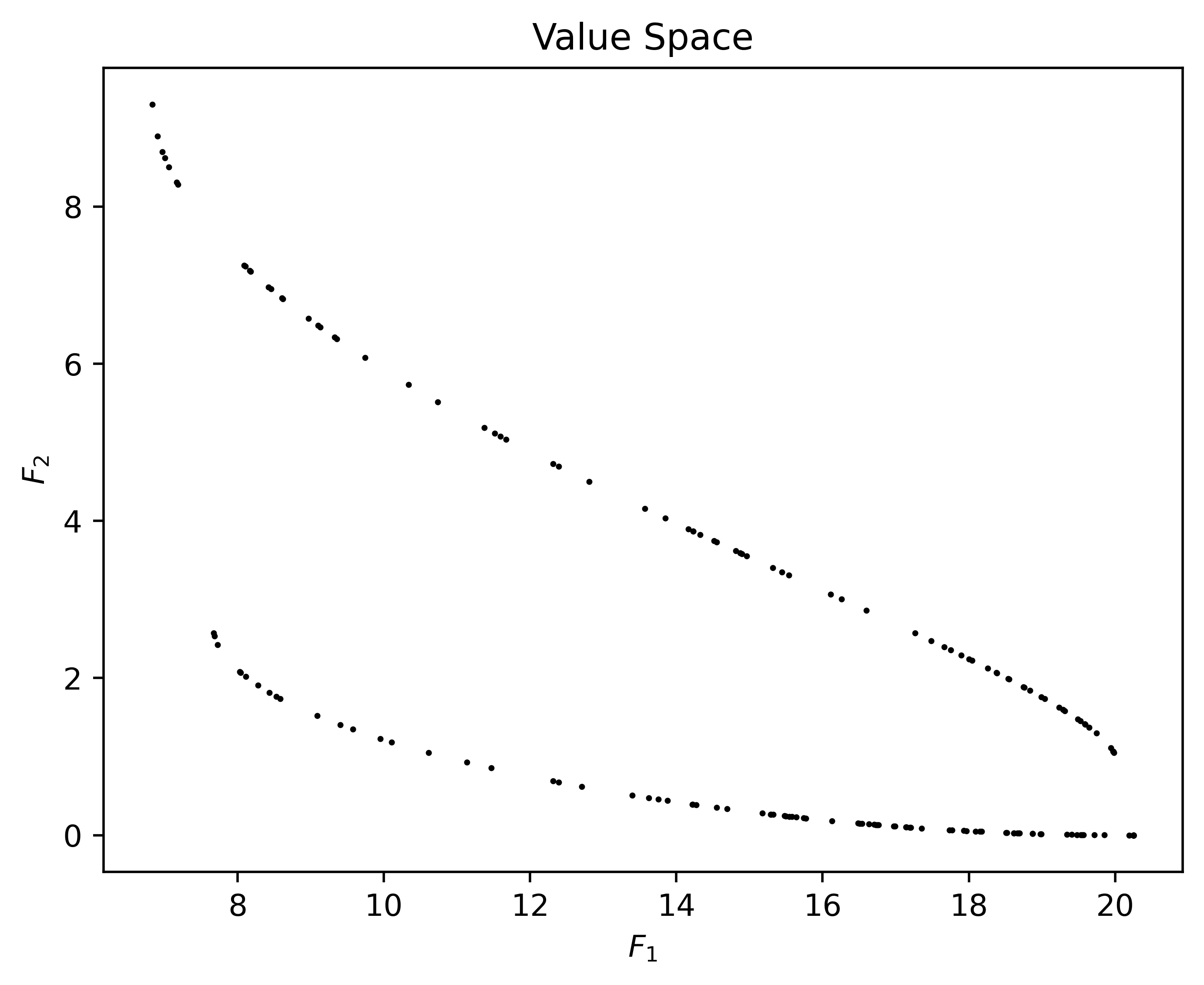} \\
    		\includegraphics[scale=0.22]{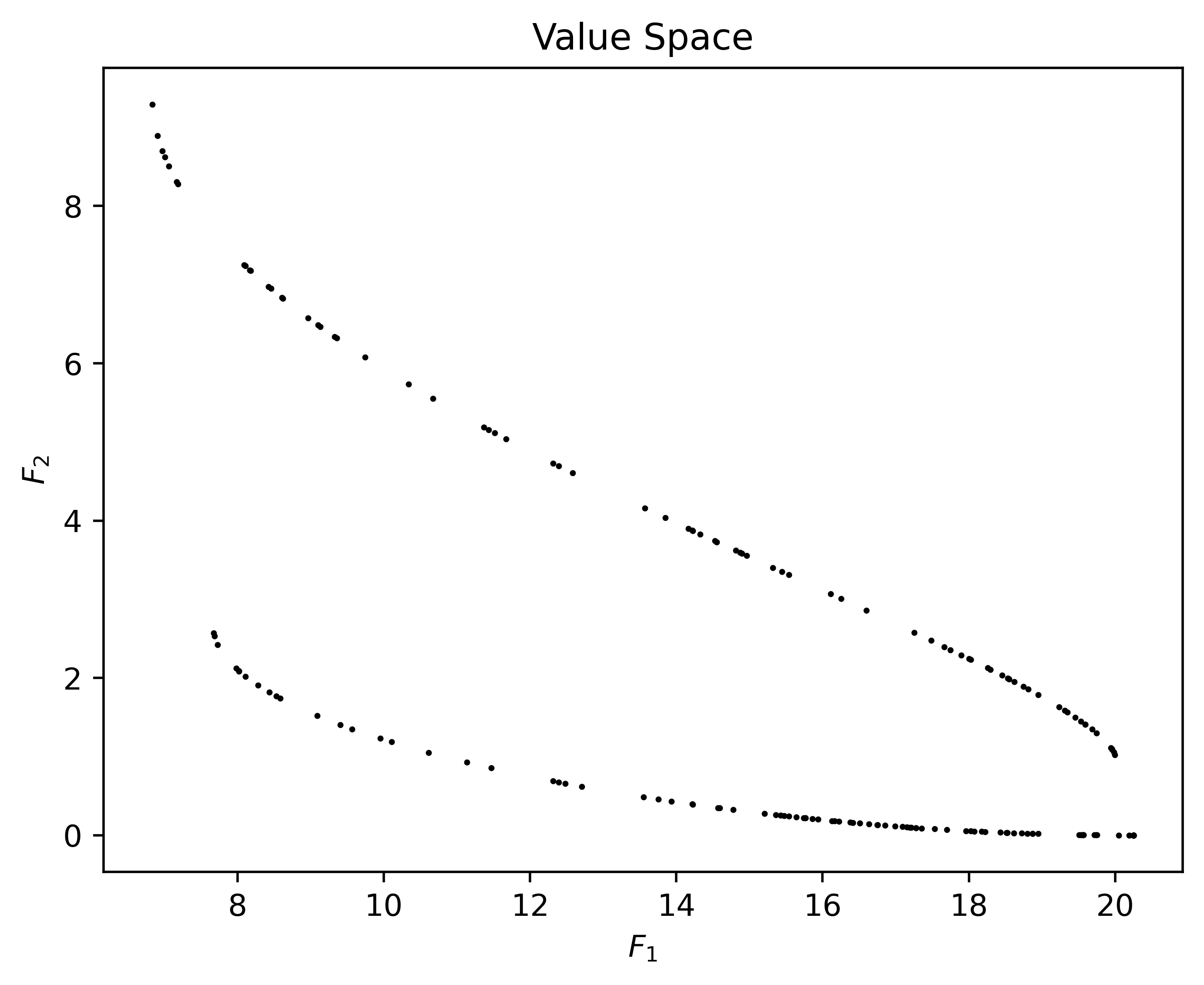}
    	\end{minipage}
    }
	\subfigure[VU1]
	{
		\begin{minipage}[H]{.22\linewidth}
			\centering
			\includegraphics[scale=0.22]{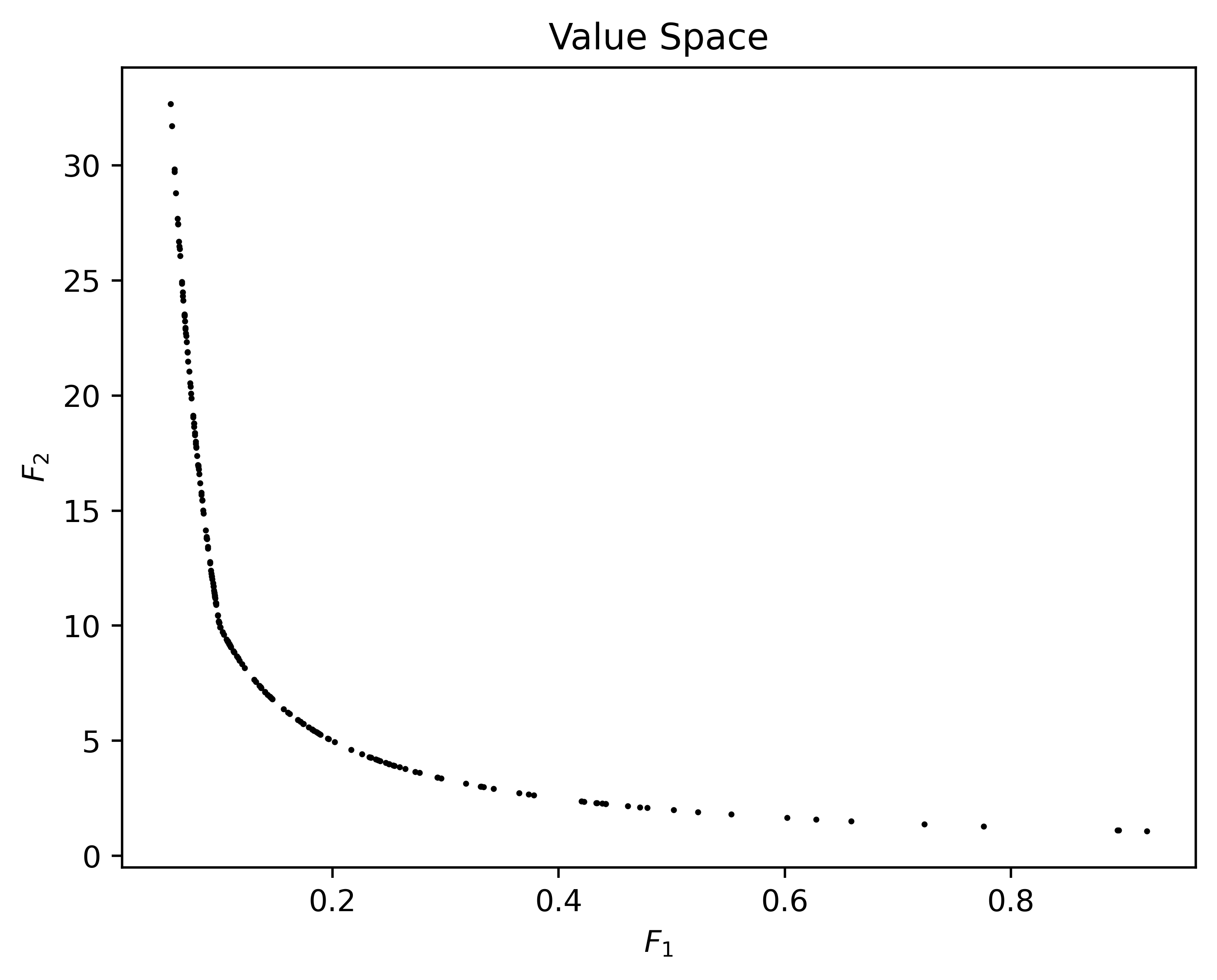} \\
			\includegraphics[scale=0.22]{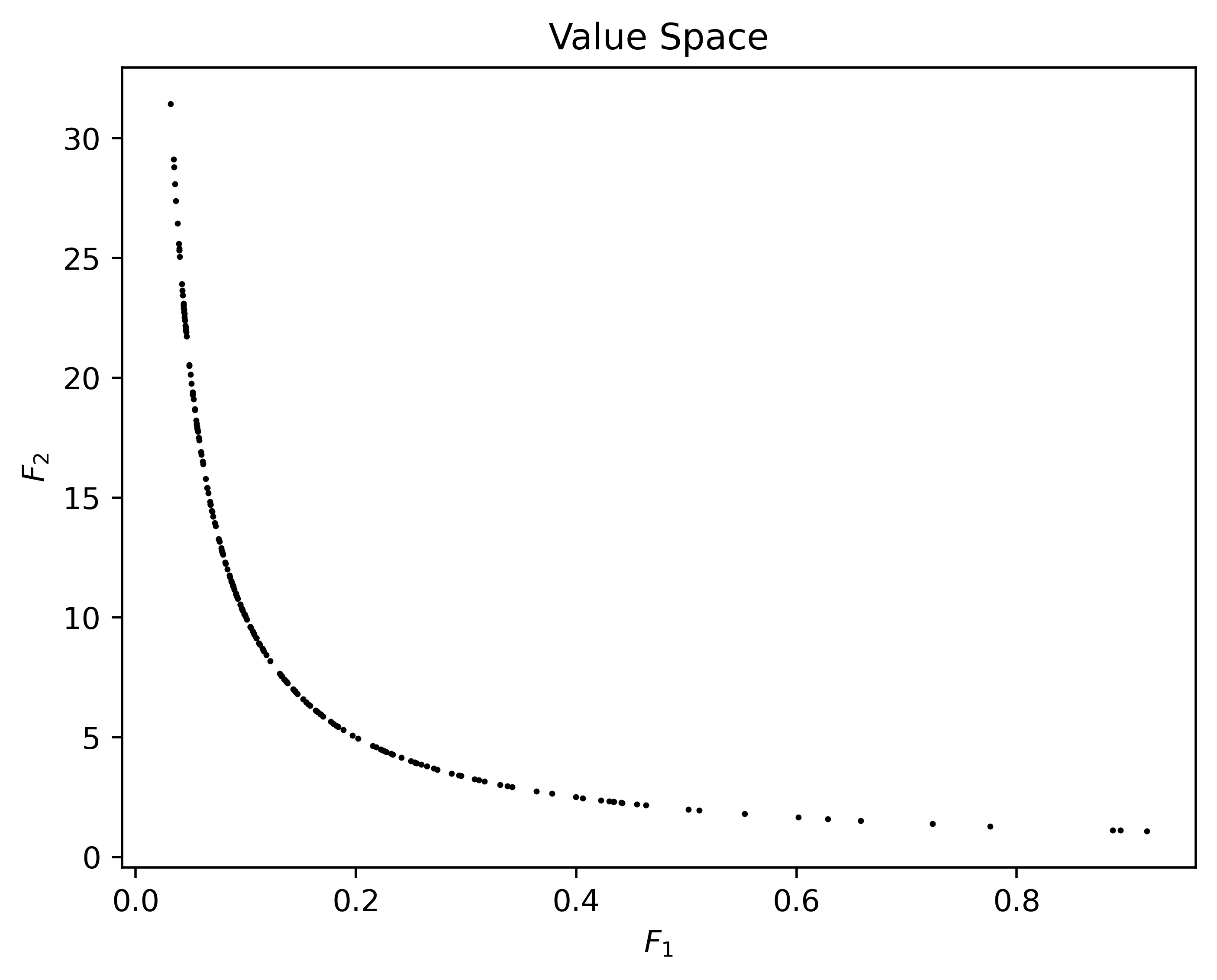} \\
			\includegraphics[scale=0.22]{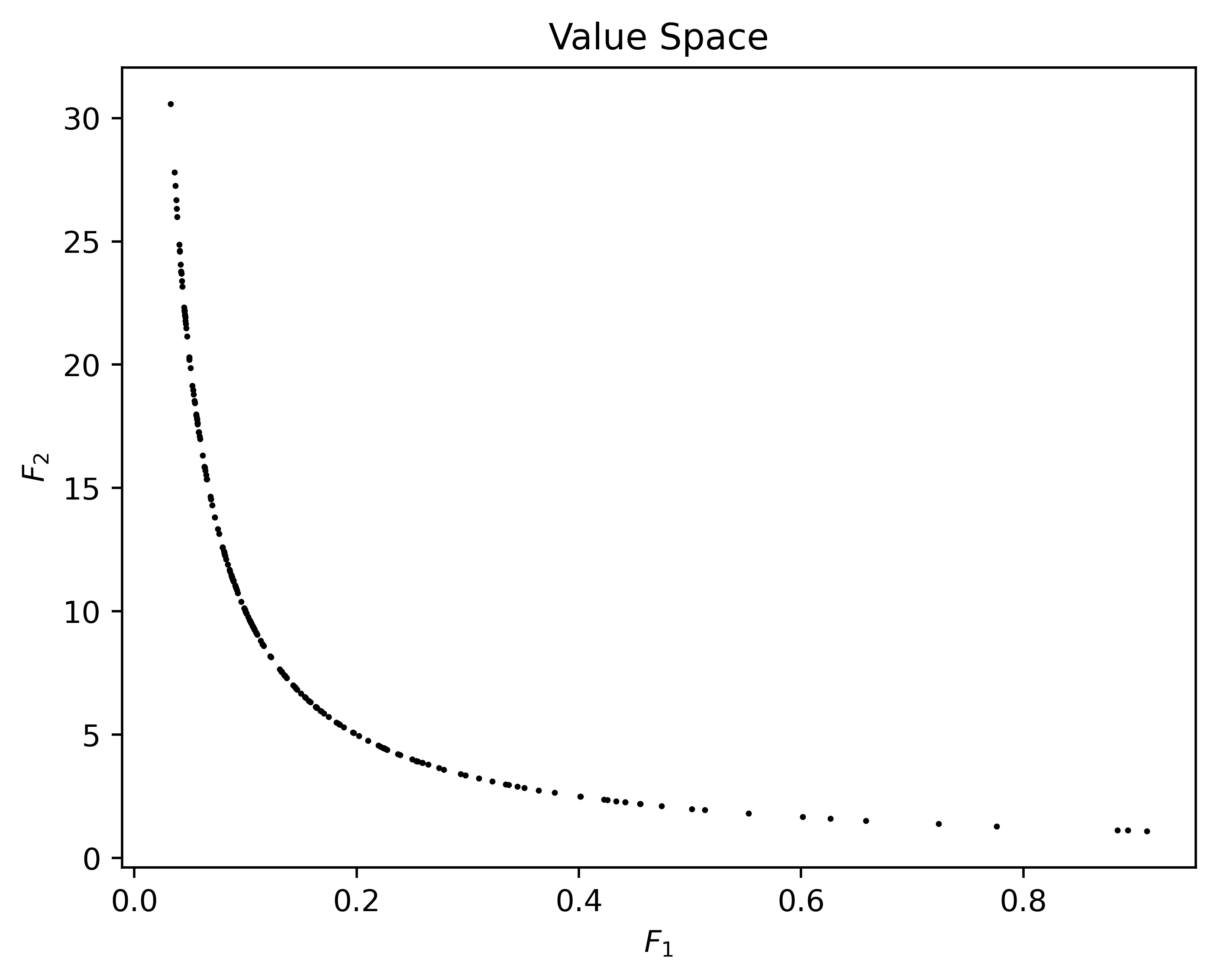}
		\end{minipage}
	}
	\caption{Numerical results in value space obtained by BBDMO ({\bf top}), BBQNMO ({\bf middle}) and SMBBMO for problems DD1, Hil1, PNR, and VU1.}
	\label{f2}
\end{figure}

%
 
\subsection{Quadratic ill-conditioned problems}
In this subsection, we evaluate the algorithm's performance on ill-conditioned problems. We consider a series of quadratic problems defined as follows:
$$F_{i}(x)=\frac{1}{2}\left\langle x,A_{i}x\right\rangle + \left\langle b_{i},x\right\rangle,~i=1,2,$$
where $A_{i}$ is a positive definite matrix. We set $A_{i}=H_{i}D_{i}H_{i}^{T}$, where $H_{i}$ is a random
orthogonal matrix and $D_{i}=Diag(d^{1}_{i},d^{2}_{i},...,d^{n}_{i})$ with $\max_{j}d^{j}_{i}/\min_{j}d^{j}_{i}=\kappa_{i}$. The problem illustration is given in Table \ref{tab2}. The second and third columns present the objective functions' dimension and condition numbers, respectively, while $x_L$ and $x_U$
represent the lower and upper bounds of the variables, respectively.

\begin{table}[H]
	\begin{center}
		\caption{Description of quadratic problems}
		\label{tab2}\vspace{-5mm}
	\end{center}
	\small
	\centering
	\begin{tabular}{clrlclrlrl}
		\hline
		\multicolumn{1}{l}{Problem} &  & n   &           & $(\kappa_{1},\kappa_{2})$ &           & \multicolumn{1}{c}{$x_{L}$} &  & \multicolumn{1}{c}{$x_{U}$} &  \\ \hline
		QPa                         &  & 10  &           & $(10,10)$ &           & 10{[}-1,...,-1{]}      &  & 10{[}1,...,1{]}        &  \\
		QPb                         &  & 10  &           & $(10^{2},10^{2})$ &           & 10{[}-1,...,-1{]}      &  & 10{[}1,...,1{]}        &  \\
		QPc                         &  & 100 &           & $(10^{2},10^{2})$ &           & 100{[}-1,...,-1{]}     &  & 100{[}1,...,1{]}       &  \\
		QPd                         &  & 100 &           & $(10^{3},10^{3})$ &           & 100{[}-1,...,-1{]}     &  & 100{[}1,...,1{]}       &  \\
		QPe                         &  & 500 &           & $(10^{3},10^{3})$ &           & 500{[}-1,...,-1{]}     &  & 500{[}1,...,1{]}       &  \\
		QPf                         &  & 500 & \textbf{} & $(10^{4},10^{4})$ & \textbf{} & 500{[}-1,...,-1{]}     &  & 500{[}1,...,1{]}       &  \\
		QPg                         &  & 1000 &           & $(10^{4},10^{4})$ &           & 1000{[}-1,...,-1{]}     &  & 1000{[}1,...,1{]}       &  \\
		QPh                         &  & 1000 &           & $(10^{5},10^{5})$ &           & 1000{[}-1,...,-1{]}     &  & 1000{[}1,...,1{]}       &  \\ \hline
	\end{tabular}
\end{table}

\begin{table}[h]
	\begin{center}
		\caption{Number of average iterations (iter), number of average function evaluations (feval), and average CPU time (time($ms$)) of BBDMO, BBQNMO,  and SMBBMO implemented on quadratic problems}\label{tab4}\vspace{-5mm}
	\end{center}
	\centering
	\resizebox{.95\columnwidth}{!}{
		\begin{tabular}{lrrrrrrrrrrrrrrr}
			\hline
			Problem &
			\multicolumn{3}{l}{BBDMO} &
			&
			\multicolumn{3}{l}{BBQNMO} &
			\multicolumn{1}{l}{} &
			\multicolumn{3}{l}{SMBBMO}  \\ \cline{2-4} \cline{6-8} \cline{10-12} 
			&
			\multicolumn{1}{r}{iter} &
			\multicolumn{1}{r}{feval} &
			\multicolumn{1}{r}{time} &
			\textbf{} &
			\multicolumn{1}{r}{iter} &
			\multicolumn{1}{r}{feval} &
			\multicolumn{1}{r}{time} &
			\multicolumn{1}{r}{} &
			\multicolumn{1}{r}{iter} &
			\multicolumn{1}{r}{feval} &
			\multicolumn{1}{r}{time}  \\ \hline
			QPa & 12.06 & 13.44 & \textbf{1.38} &  & \textbf{9.55} & 13.77 & {1.89} &  & 9.96 & \textbf{10.65} & 2.67  \\
			QPb & {42.24} & {67.46} & 5.04 &  & \textbf{20.16} & 38.92 & \textbf{4.32} &  & 20.67 & \textbf{31.60} & 5.92  \\
			QPc & 53.39 & 82.49 & \textbf{8.47} &  & \textbf{34.59} & 65.80 & 10.52 &  & {36.20} & \textbf{42.68} & {10.39}  \\
			QPd & 180.45 & 356.16 & 31.72 &  & \textbf{42.81} & 88.31 & \textbf{13.32} &  & 58.78 & \textbf{81.57} & 17.80  \\
			QPe & 184.43 & 343.49 & 111.07 &  & \textbf{64.94} & 110.92 & 830.70 &  & 81.60 & \textbf{87.49} & \textbf{47.68}  \\
			QPf & 436.72 & 1168.17 & 432.60 &  & \textbf{116.48} & 279.83 & 1286.07 &  & 121.55 & \textbf{203.98} & \textbf{80.80}  \\
			QPg & 320.00 & 909.17 & 1164.93 &  & 157.15 & 511.41 & 8483.27 &  & \textbf{154.84} & \textbf{189.42} & \textbf{468.84} \\ 
			QPh & 500.00 & 2856.25 & 3513.05 &  & \textbf{262.81} & 1106.15 & 15049.72 &  & 375.41 & \textbf{785.56} & \textbf{1542.79} \\ \hline
		\end{tabular}
	}
\end{table}

\begin{figure}[H]
	\centering
	\subfigure[QPe]
	{
		\begin{minipage}[H]{.22\linewidth}
			\centering
			\includegraphics[scale=0.22]{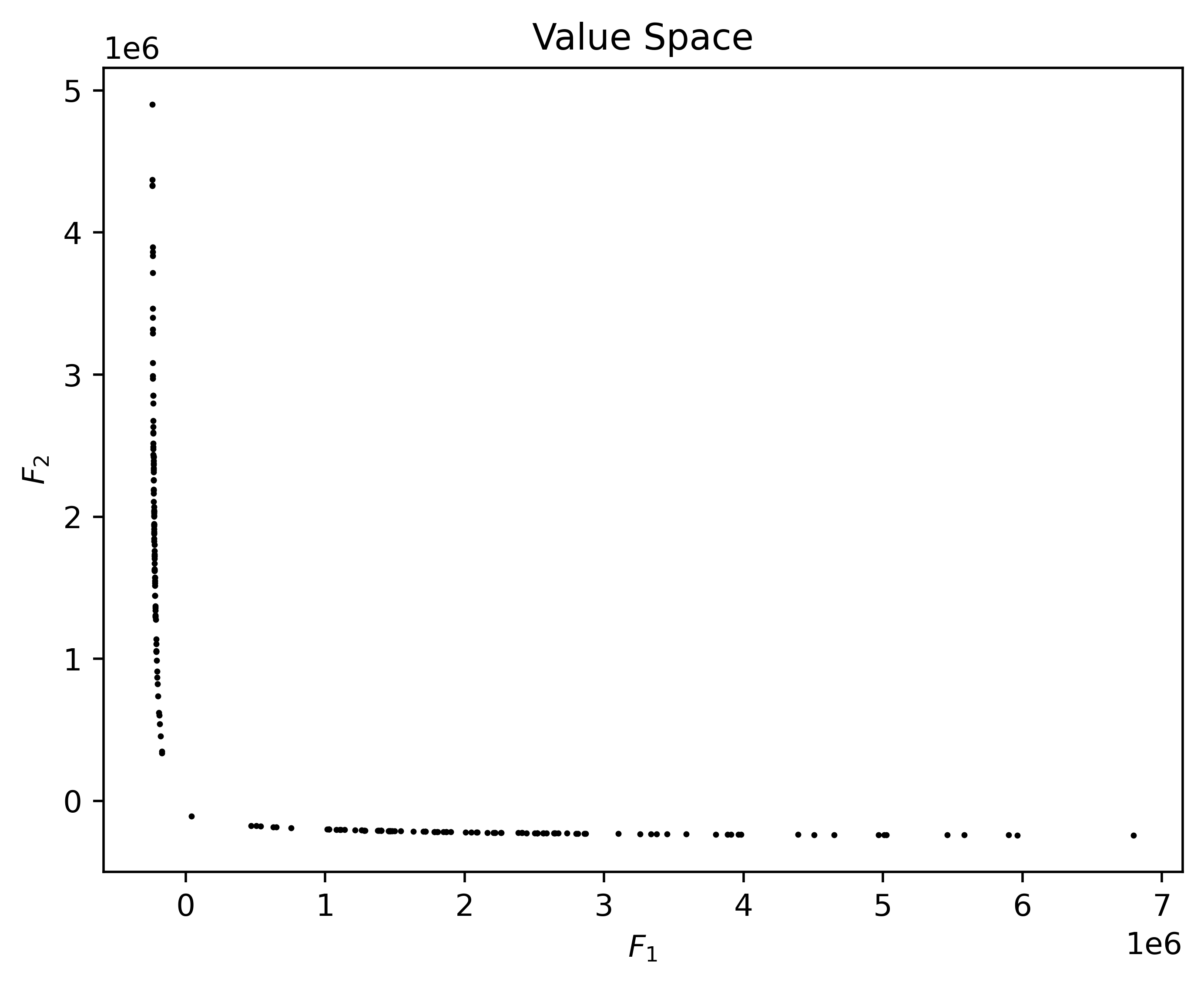} \\
			\includegraphics[scale=0.22]{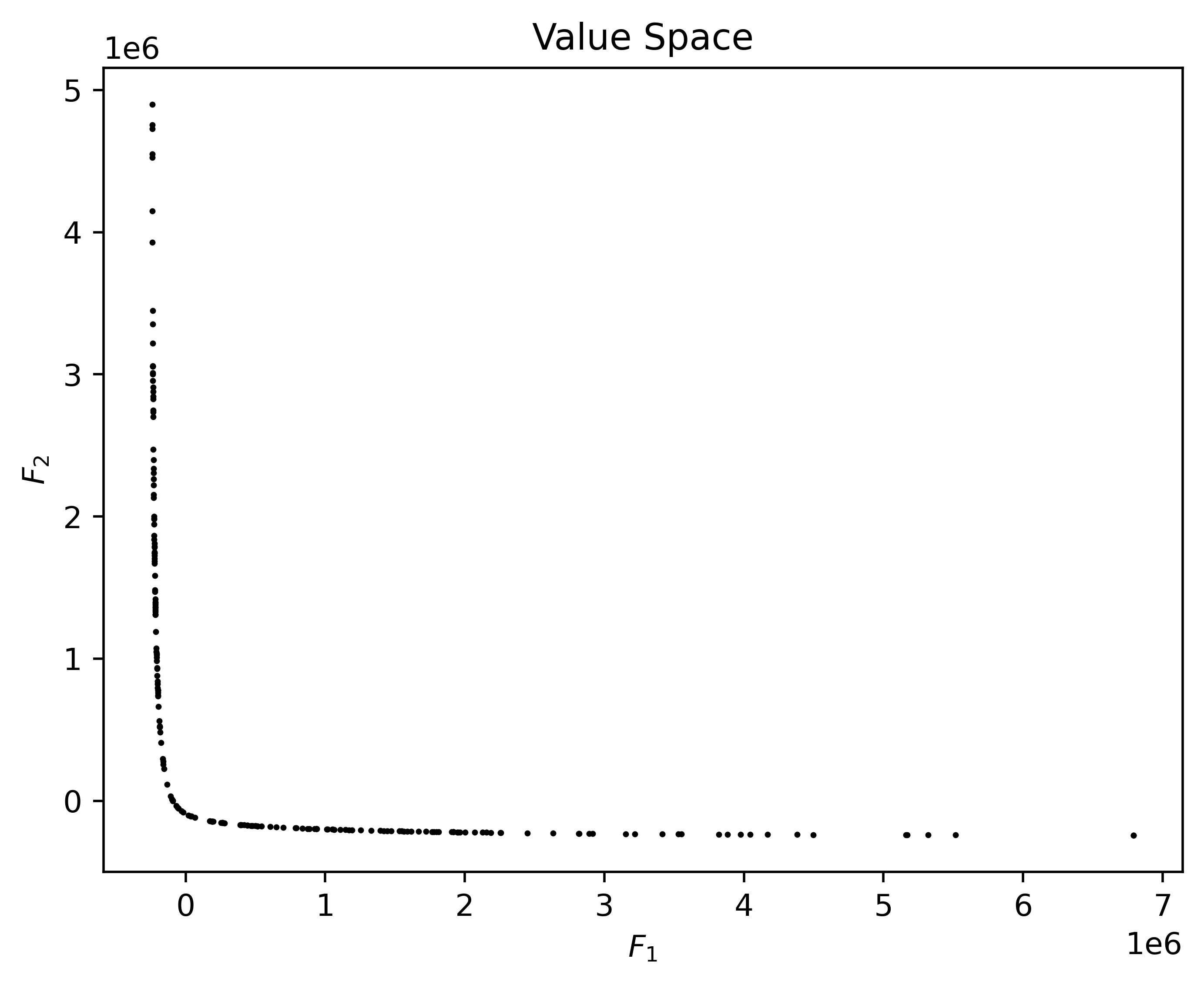} \\
			\includegraphics[scale=0.22]{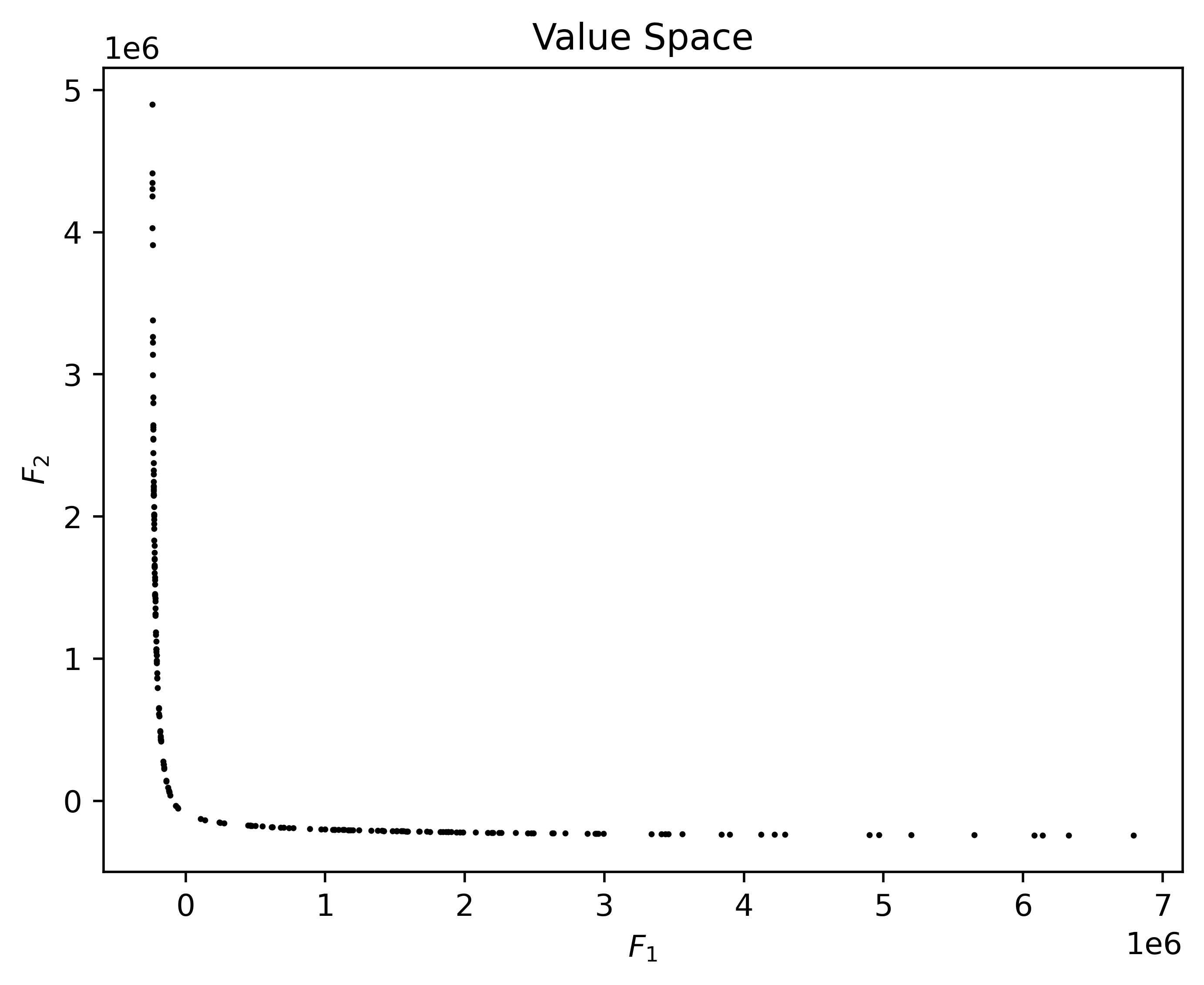}
		\end{minipage}
	}
	\subfigure[QPf]
	{
		\begin{minipage}[H]{.22\linewidth}
			\centering
			\includegraphics[scale=0.22]{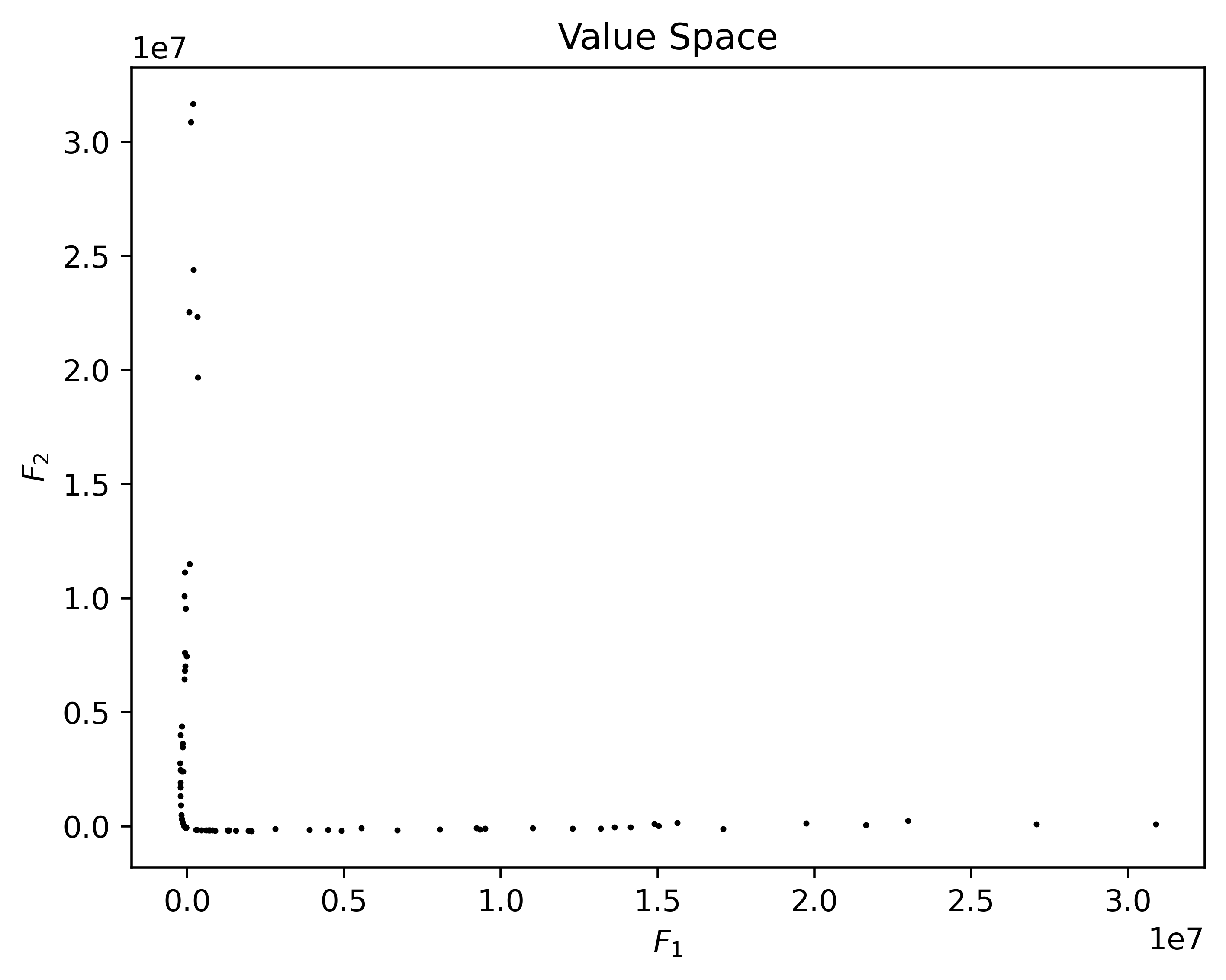} \\
			\includegraphics[scale=0.22]{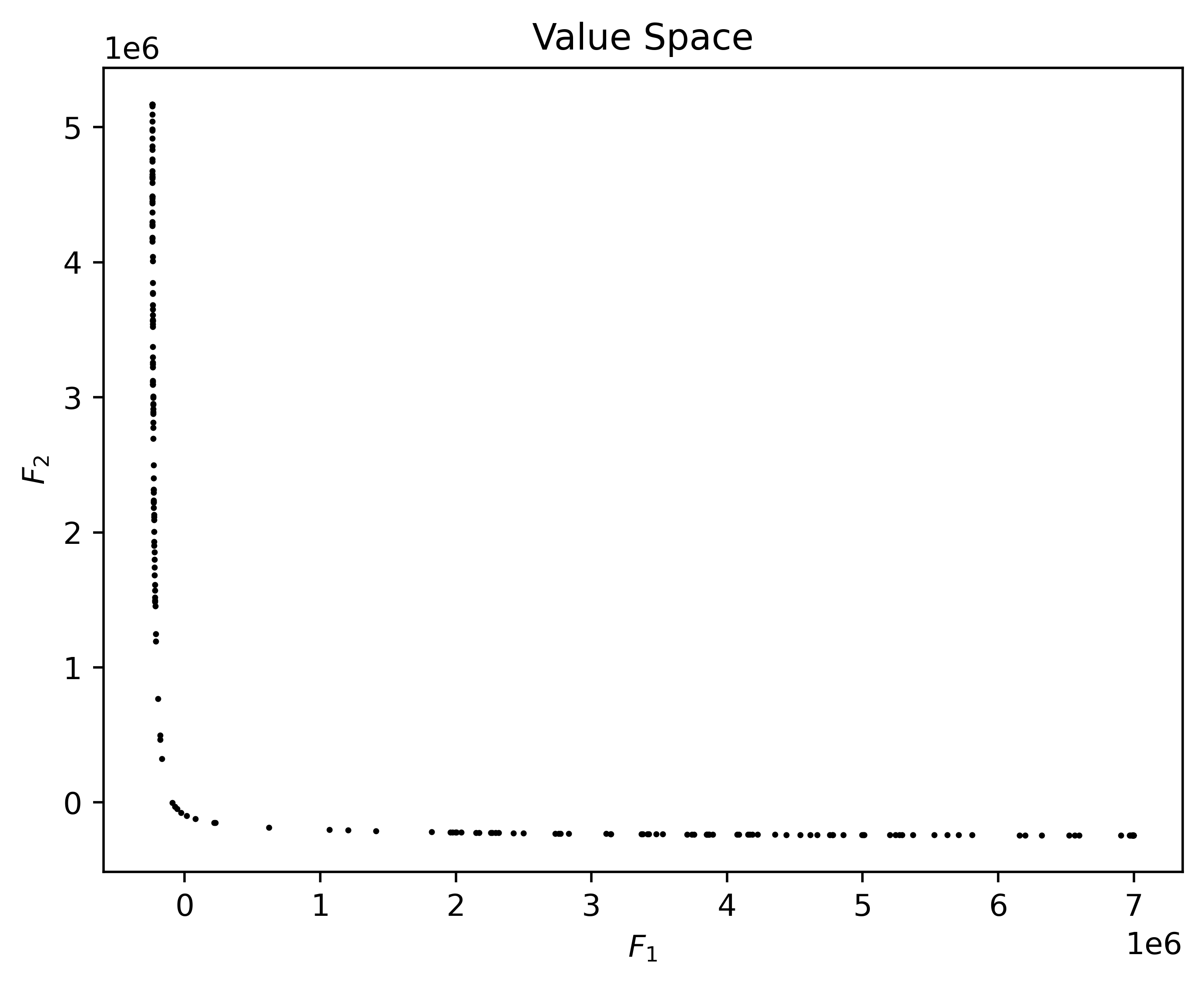} \\
			\includegraphics[scale=0.22]{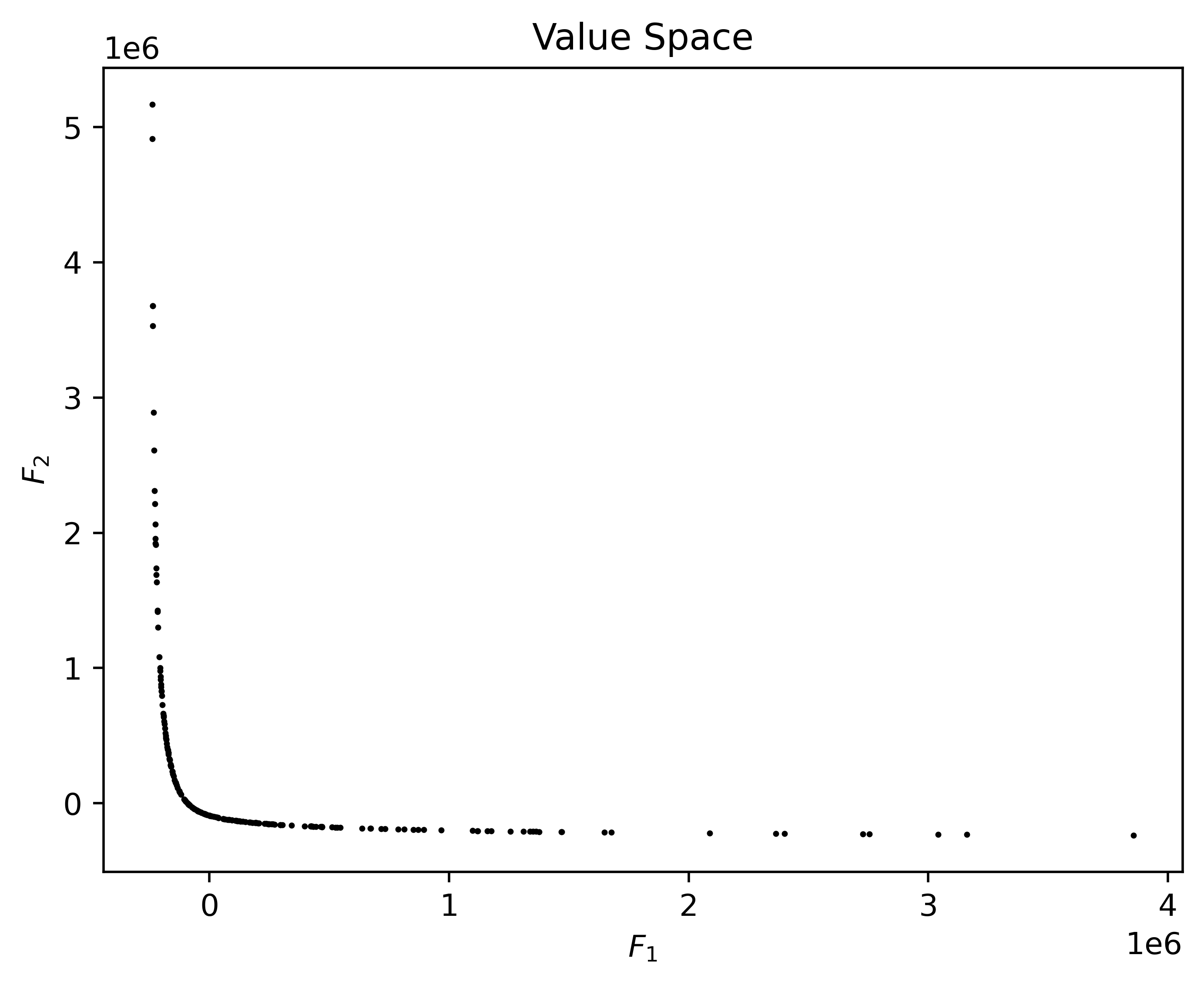}
		\end{minipage}
	}
	\subfigure[QPg]
	{
		\begin{minipage}[H]{.22\linewidth}
			\centering
			\includegraphics[scale=0.22]{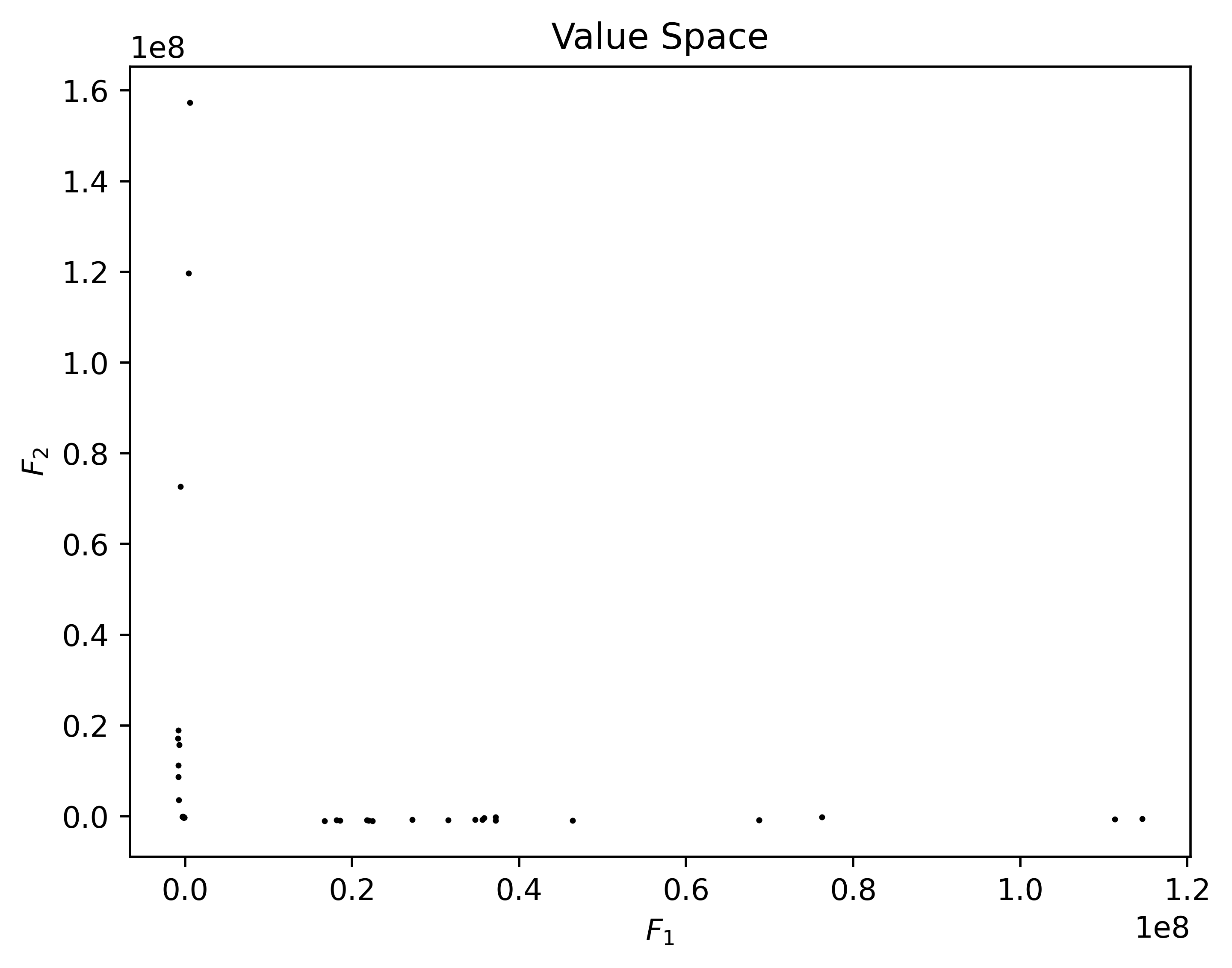} \\
			\includegraphics[scale=0.22]{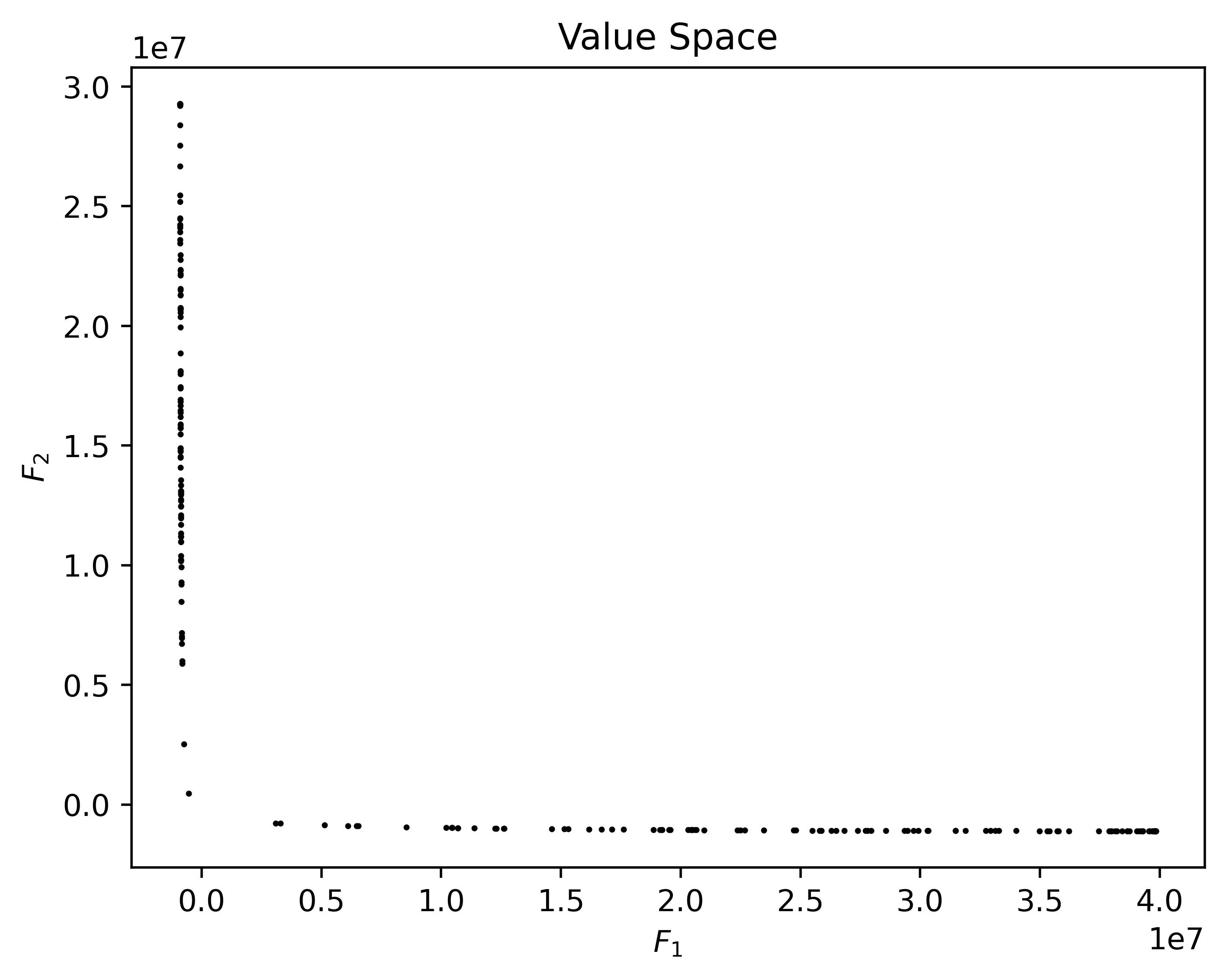} \\
			\includegraphics[scale=0.22]{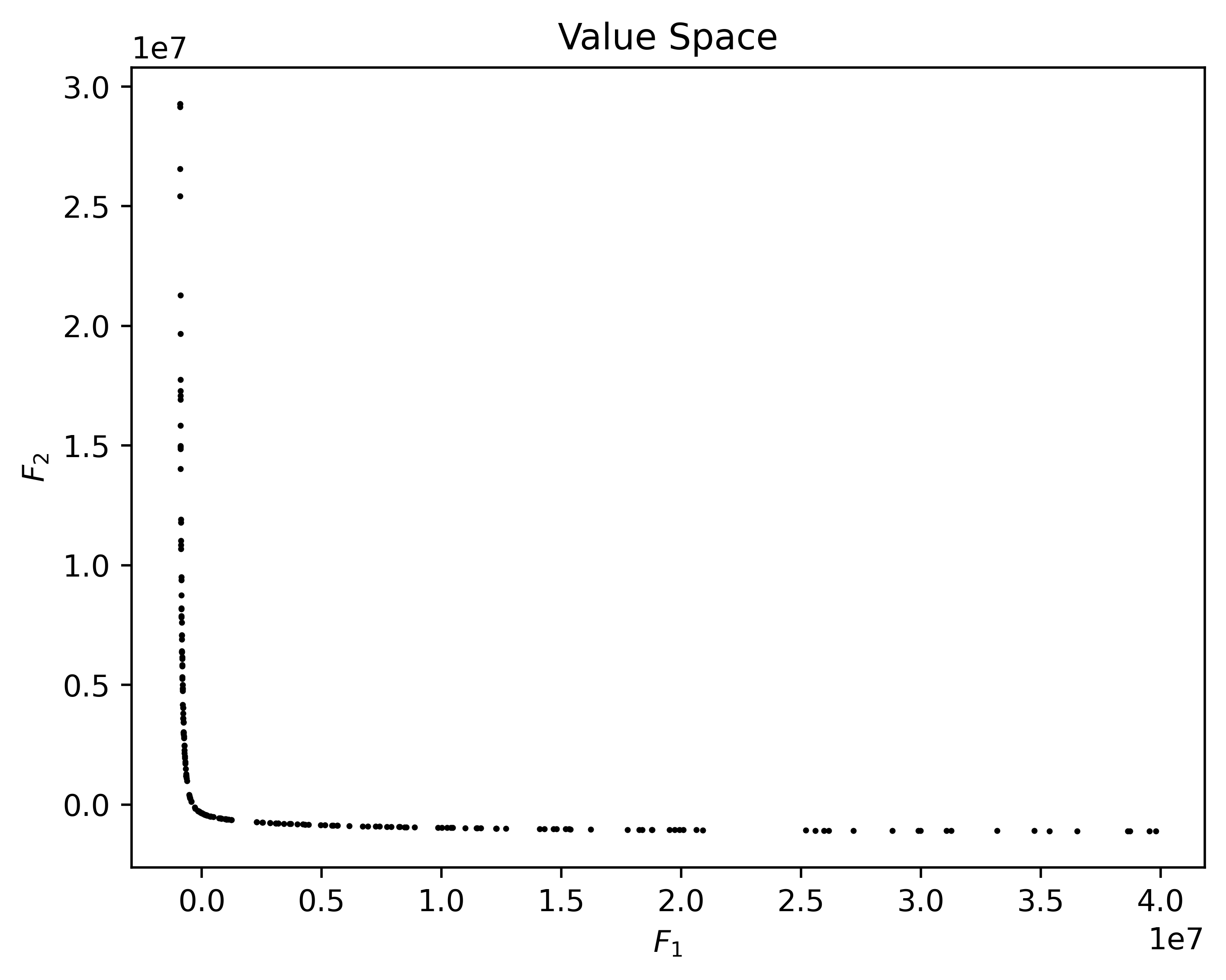}
		\end{minipage}
	}
	\subfigure[QPh]
	{
		\begin{minipage}[H]{.22\linewidth}
			\centering
			\includegraphics[scale=0.22]{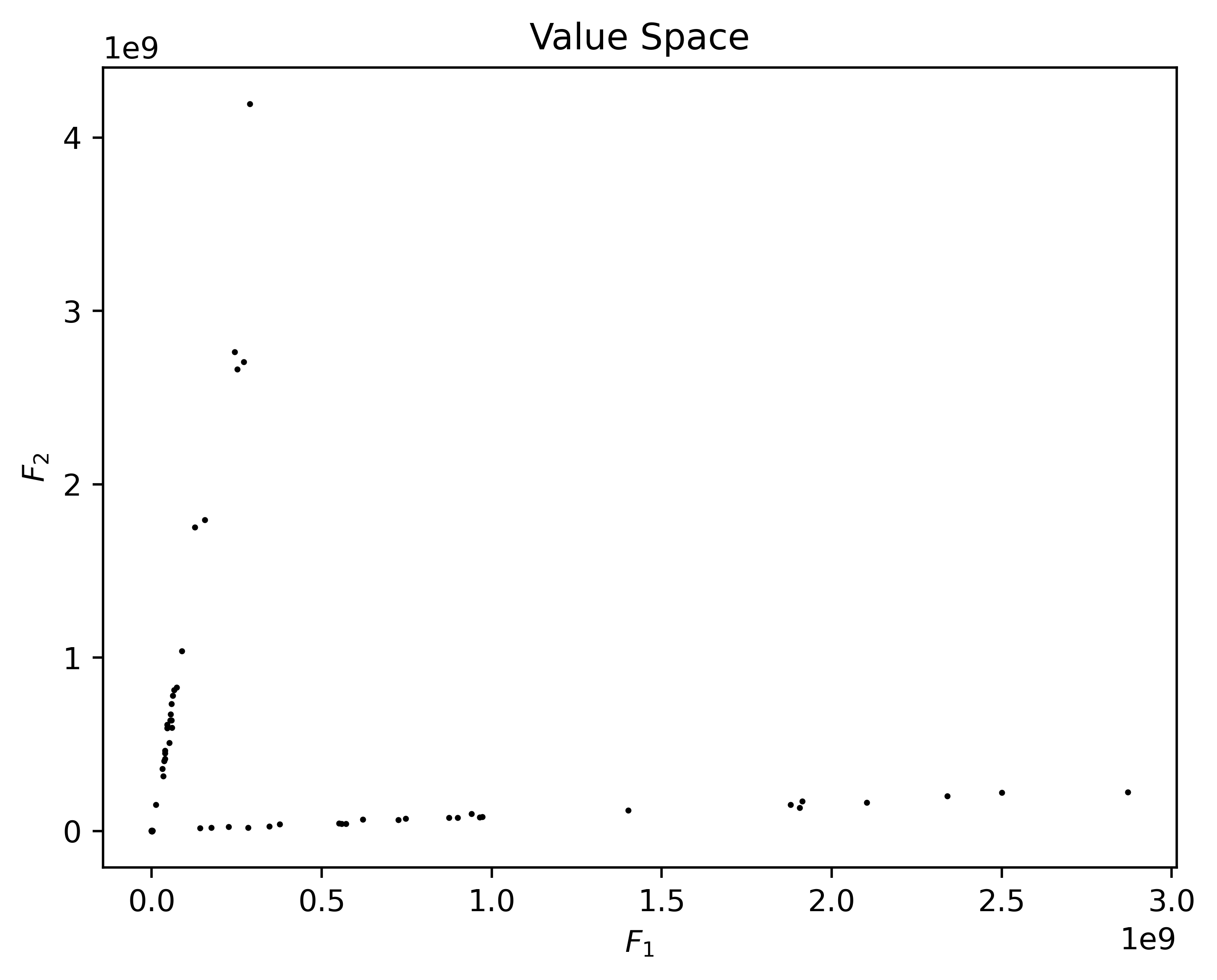} \\
			\includegraphics[scale=0.22]{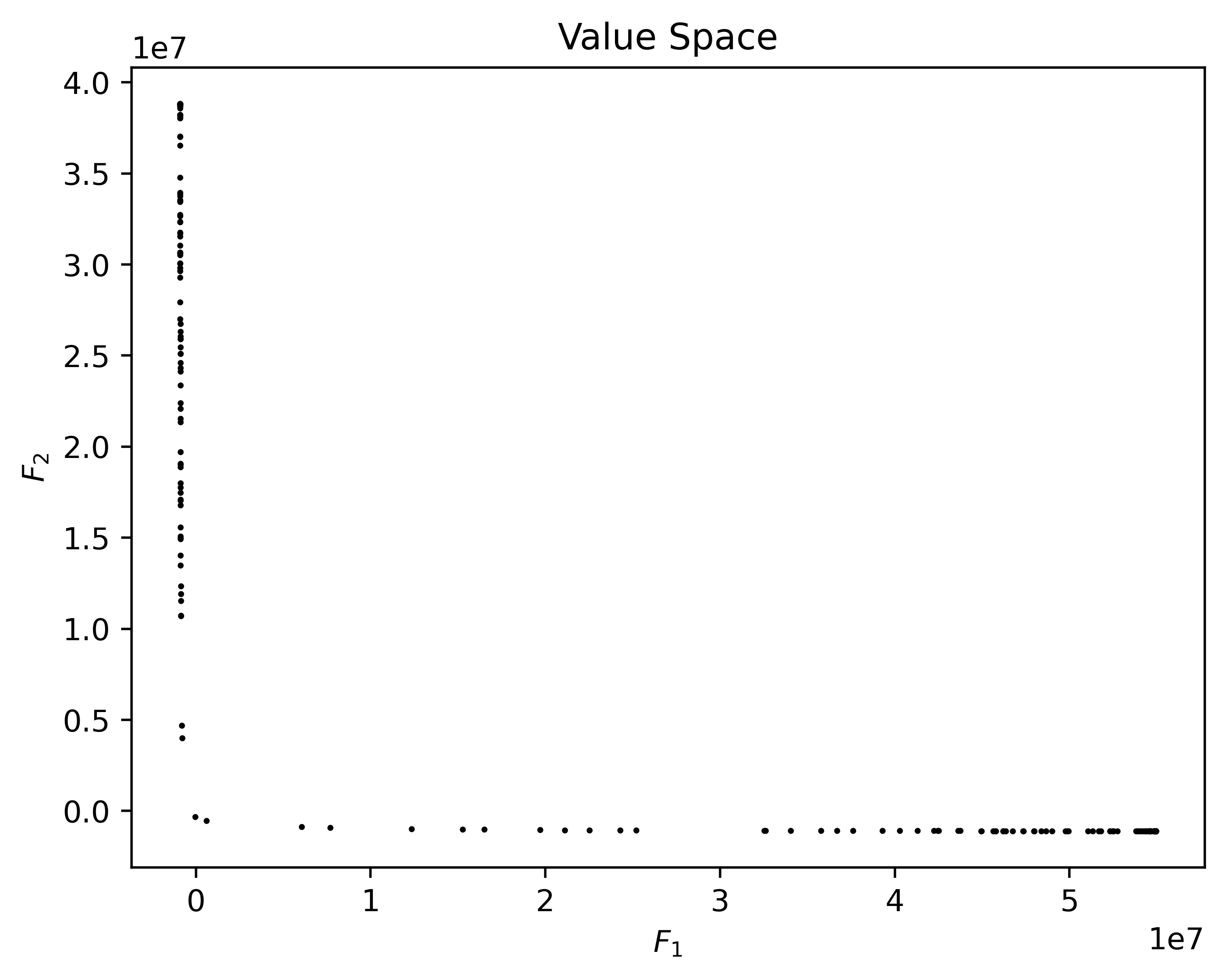} \\
			\includegraphics[scale=0.22]{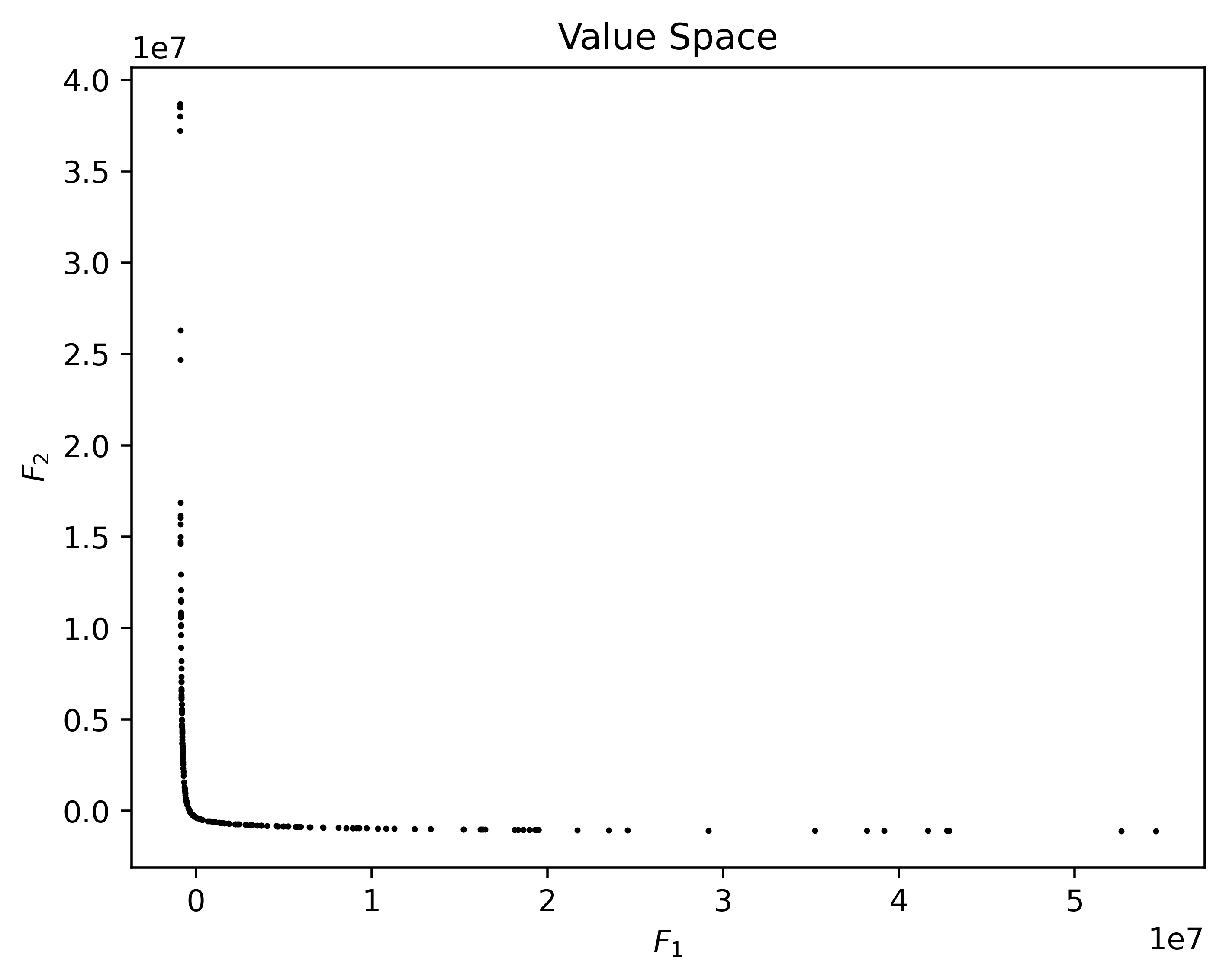}
		\end{minipage}
	}
	\caption{Numerical results in value space obtained by BBDMO ({\bf top}), BBQNMO ({\bf middle}) and SMBBMO for problems QPe, QPf, QPg, and QPh.}
	\label{f4}
\end{figure}
Table \ref{tab4} illustrates the average number of iterations (iter), average number of function evaluations (feval), and average CPU time (time in milliseconds) obtained from 200 experimental runs for each quadratic problem. BBDMO, being a first-order method, exhibits competence in handling moderately ill-conditioned problems (QPb-e) owing to the Barzilai-Borwein rule, yet it struggles to converge within 500 iterations on extremely ill-conditioned problems (QPf-h). Conversely, for ill-conditioned and high-dimensional problems (QPe-h), SMBBMO demonstrates a notable superiority over BBQNMO in terms of CPU time efficiency. It is notable that SMBBMO shows promise in capturing the local curvature of ill-conditioned problems. To sum up, the primary experimental results underscore that SMBBMO achieves a faster convergence rate than BBDMO while maintaining a lower computational cost than BBQNMO.
\section{Conclusions}\label{sec8}
In this paper, we introduce a novel subspace minimization Barzilai-Borwein method for MOPs, which outperforms BBDMO in terms of convergence rate while requiring lower computational resources compared to BBQNMO. We employ a modified Cholesky factorization to ensure global convergence of the proposed method in non-convex scenarios. Our numerical experiments demonstrate that SMBBMO exhibits promising performance for tackling large-scale and ill-conditioned MOPs.
\par From a methodological perspective, it may be worth considering the following points:
\begin{itemize}
	\item By selecting different subspaces, more historical iteration information (see \cite{A2014, CYZ2023m}) can be utilized to construct the subspace. 
	\item By selecting different approximate models, SMCG with cubic regularization \cite{ZLL2021} can also be extended to MOPs.
	 
\end{itemize}

\begin{acknowledgements}
	This work was funded by the Major Program of the National Natural Science Foundation of China [grant numbers 11991020, 11991024]; the National Natural Science Foundation of China [grant numbers 11971084, 12171060]; NSFC-RGC (Hong Kong) Joint Research Program [grant number 12261160365]; the Team Project of Innovation Leading Talent in Chongqing [grant number CQYC20210309536]; the Natural Science Foundation of Chongqing [grant number ncamc2022-msxm01]; Major Project of Science and Technology Research Rrogram of Chongqing Education Commission of China [grant number KJZD-M202300504]; and Foundation of Chongqing Normal University [grant numbers 22XLB005, 22XLB006].
\end{acknowledgements}

\bibliographystyle{abbrv}
\bibliography{references}

\end{document}